\documentclass[11pt,leqno]{article}
\usepackage[utf8]{inputenc}
\usepackage{amsmath,amssymb,amsthm,latexsym}
\usepackage{indentfirst}
\usepackage{amsmath}

\newtheorem{theorem}{\bf \large Theorem}[section]
\newtheorem{proposition}{\bf \large Proposition}[section]
\newtheorem{corollary}{\bf \large Corollary}[section]
\newtheorem{lemma}{\bf \large Lemma}[section]
\newtheorem{example}{\bf \large Example}[section]
\newtheorem{remark}{\bf \large Remark}[section]

\title{Examples of topological sphere  $\lambda$-self-expander}

\author {{Mengdan Qi} \\
\small{Department of Mathematics, Beijing Institute of Technology, Beijing, 100081, China.} \\
\small{ E-mail: mengdanqi@bit.edu.cn.}}
\date{}

\begin{document}
\maketitle
\begin{abstract}
A $\lambda$-self-expander $F: M^n\to \mathbb{R}^{n+1}$ is  the solution of the isoperimetric problem of weight $e^{\frac{|x|^2}{4}}$. In this paper, for a sufficiently large $\lambda$,  we construct an immersed non-embedded spherical $\lambda$-self-expander in $\mathbb{R}^{n+1}$ with the mean curvature $H>0$.  Moreover, we obtain the explicit solution for $n=2$.
\end{abstract}

\medskip\noindent
{\bf 2020 Mathematics Subject Classification:} 53C42, 49Q20.
\par\noindent {\bf Key words:}  $\lambda$-self-expander; self-expander; Mean curvature; rotational hypersurface.

\section{Introduction}

An immersed hypersurface $F: M^n \to \mathbb{R}^{n+1}$ is called a $\lambda$-self-expander if it satisfies
\begin{equation}\label{def}
    H - \frac12 \langle F,\xi \rangle = \lambda,
\end{equation}
where $\xi$ is  the inward unit normal vector field of $F$,  $H$ is the mean curvature of $F$, and $\lambda$ is a constant.
The concept of $\lambda$-self-expanders was first proposed by Ancari and Cheng in \cite{ac}. When $\lambda=0$, the equation reduces to the defining equation of standard self-expanders, so $\lambda$-self-expanders serve as a natural generalization of classical self-expanders. Moreover, $\lambda$-self-expanders are critical points of a weighted area functional under volume constraint, with the weight function given by $e^{\frac{|F|^2}{4}}$.
These hypersurfaces are dual to the well-studied $\lambda$-hypersurfaces, which satisfy $H+\tfrac12\langle F,\xi\rangle=\lambda$ and correspond to self-shrinking weighted solitons. Despite their natural variational origin, the global rigidity and classification theory for  closed $\lambda$-self-expanders remains significantly underdeveloped compared to the mature literature on $\lambda$-hypersurfaces, especially, we know very little about examples of closed $\lambda$-self-expander.

The construction and classification of self-similar hypersurfaces have been extensively studied. For self-shrinkers, the simplest examples are hyperplanes through the origin, round spheres and generalized cylinders. In 1992, Angenent \cite{ang} constructed an embedded self-shrinker which is diffeomorphic to $\mathbb S^1\times\mathbb S^{n-1}$. Chopp \cite{chopp} subsequently developed a numerical algorithm and obtained numerical evidence for several further self-shrinking examples. Further embedded self-shrinkers, including noncompact and higher-genus examples, were constructed by Kleene and M{\o}ller \cite{km}, Kapouleas, Kleene and M{\o}ller \cite{kkm}, M{\o}ller \cite{moller}, and Nguyen \cite{ng3}.

In the immersed setting, further nontrivial examples are known. Drugan \cite{drugan} constructed an immersed and non-embedded $\mathbb S^2$ self-shrinker in $\mathbb R^3$, and Drugan and Kleene \cite{dk} constructed further immersed rotational self-shrinkers. Drugan and Nguyen \cite{dn} gave an alternative proof of the existence of a self-shrinking torus, while Drugan, Lee and Nguyen \cite{dln} surveyed known results on closed self-shrinkers and discussed techniques used in the construction of closed self-shrinkers with rotational symmetry. These results provide several nontrivial compact and complete self-shrinkers beyond the standard examples.

A closely related class is that of $\lambda$-hypersurfaces. Cheng and Wei \cite{cw1} introduced $\lambda$-hypersurfaces in their study of the weighted volume-preserving mean curvature flow. The case $\lambda=0$ reduces to the self-shrinker equation. The variational, stability and rigidity properties of $\lambda$-hypersurfaces have subsequently been studied by many authors. McGonagle and Ross \cite{mr} considered the corresponding Gaussian isoperimetric problem. Rigidity and gap phenomena for complete $\lambda$-hypersurfaces were studied, among others, by Cheng, Ogata and Wei \cite{cow}, Guang \cite{guang}, Wang, Xu and Zhao \cite{wxz}, and Zhu, Fang and Chen \cite{zfc}. Cheng and Wei also studied stability, area growth and other global geometric properties of $\lambda$-hypersurfaces \cite{cw3}.

Besides rigidity results, a number of nontrivial examples of $\lambda$-hypersurfaces are known. Cheng and Wei \cite{cw2} constructed compact rotational $\lambda$-hypersurfaces, including both embedded and immersed examples. Ross \cite{ross} constructed closed embedded $\lambda$-hypersurfaces diffeomorphic to $\mathbb S^{n-1}\times\mathbb S^{n-1}\times\mathbb S^1$. Related to the present work, Li and Wei \cite{liwei} constructed an immersed and non-embedded $\mathbb S^n$ $\lambda$-hypersurface in $\mathbb R^{n+1}$. These results demonstrate the existence of a variety of nontrivial compact $\lambda$-hypersurfaces.

Self-expanders have also attracted considerable attention. Smoczyk \cite{smoczyk} studied complete mean-convex self-expanders and obtained rigidity results under natural geometric assumptions. Ancari and Cheng \cite{acexp} studied volume growth, weighted volume, spectral estimates and rigidity properties of complete immersed self-expanders. In contrast with self-shrinkers, there are no closed self-expanders in Euclidean space. This naturally motivates the study of $\lambda$-self-expanders, obtained by introducing a nonzero constant into the self-expander equation.

The theory of $\lambda$-self-expanders was systematically developed by
Ancari and Cheng \cite{ac}, who first exhibited the following three basic
families of standard examples.
\begin{example}
Any affine hyperplane in $\mathbb{R}^{n+1}$ is a $\lambda$-self-expander.
More precisely, if the distance from the hyperplane to the origin is $d$,
then
\[
\lambda=\pm\frac{d}{2},
\]
where the sign depends on the choice of orientation.
\end{example}

\begin{example}
A round sphere $\mathbb{S}^n_r(0)$ centered at the origin with radius
$r>0$ is a $\lambda$-self-expander with
\[
\lambda=\frac{n}{r}+\frac{r}{2}\geq \sqrt{2n}.
\]
Consequently, for every $\lambda>\sqrt{2n}$, there are two centered round
spheres corresponding to the radii
\[
r=\lambda\pm\sqrt{\lambda^2-2n}.
\]
Moreover, among round spheres, only those centered at the origin can be
$\lambda$-self-expanders.
\end{example}

\begin{example}
For each $1\leq k\leq n-1$, the generalized cylinder
\[
\mathbb{S}^k_r(0)\times\mathbb{R}^{n-k}
\]
is a $\lambda$-self-expander with
\[
\lambda=\frac{k}{r}+\frac{r}{2}\geq \sqrt{2k}.
\]
Thus, for every $\lambda>\sqrt{2k}$, there are two such generalized
cylinders, with radii
\[
r=\lambda\pm\sqrt{\lambda^2-2k}.
\]
As in the spherical case, the spherical factor must be centered at the
origin.
\end{example}

In addition to identifying these standard examples, Ancari and Cheng \cite{ac} established several rigidity results characterizing them and studied the area growth and weighted area properties of complete $\lambda$-self-expanders. Related generalized self-similar equations were considered by L\'opez \cite{lopez}, who classified ruled and translation surfaces satisfying $H=\alpha\langle F,N\rangle+\lambda$. More recently, Deng, Yao, Yu and Zhang \cite{deng} obtained classification results for two-dimensional $\lambda$-self-expanders with constant Gaussian curvature or constant squared norm of the second fundamental form.

Compared with self-shrinkers and $\lambda$-hypersurfaces, fewer nontrivial compact examples of $\lambda$-self-expanders are known. Recently, Bai and Xia \cite{baixia}, together with Florian Johne and Lauro Silini \cite{johnsilini2026}, have independently and simultaneously proved an Alexandrov-type theorem for hypersurfaces of constant weighted mean curvature in warped product manifolds with radial density. In particular, their result implies that every smooth closed embedded $\lambda$-self-expander in Euclidean space must be a round sphere centered at the origin.

In view of the results of Bai and Xia \cite{baixia} and Florian Johne and Lauro Silini \cite{johnsilini2026}, one cannot expect to construct non-round closed embedded $\lambda$-self-expanders, in contrast with the nontrivial compact embedded $\lambda$-hypersurfaces constructed in \cite{cw2}. This naturally leads us to consider the immersed setting.

In this paper, we construct a topologically spherical immersed $\lambda$-self-expander.
\begin{theorem}\label{thm:main}
Let $n \ge 2$ and set
\begin{equation}\label{eq:Lambda-explicit}
\Lambda_n = 1000^n n^3.
\end{equation}
Then, for every $\lambda \ge \Lambda_n$, there exists a smooth closed rotational immersed $\lambda$-self-expander in $\mathbb{R}^{n+1}$ which is diffeomorphic to $\mathbb{S}^n$, is not embedded, and is not a round sphere.
\end{theorem}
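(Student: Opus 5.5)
The plan is to reduce the problem to an ODE shooting problem for a planar profile curve, following Drugan's construction of an immersed $\mathbb S^2$ self-shrinker and its adaptation by Li and Wei to $\lambda$-hypersurfaces. The key steps are: set up the profile ODE, shoot from the axis to a reflection line, use an intermediate value argument in a one-parameter family, and obtain quantitative estimates for large $\lambda$.

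\textbf{Setting up the ODE.} I take $\mathbb{R}^{n+1}$ with coordinates $(x,y)$, $x\in\mathbb{R}$, $y\in\mathbb{R}^n$, and rotate a curve $\gamma(s)=(x(s),r(s))$, $r>0$, parametrized by arclength, about the $x$-axis. Write $\gamma'=(\cos\theta,\sin\theta)$ and use the inward normal $\nu=(-\sin\theta,\cos\theta)$ (up to the orientation convention). Then \eqref{def} becomes the first order system
\begin{equation*}
x'=\cos\theta,\qquad r'=\sin\theta,\qquad
\theta'=\lambda+\frac12\bigl(r\cos\theta-x\sin\theta\bigr)-(n-1)\frac{\cos\theta}{r}.
\end{equation*}
The signs of the last two terms will be fixed by checking that the centered spheres of radius $\lambda\pm\sqrt{\lambda^2-2n}$ from the earlier Example are solutions. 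The system is invariant under the reflection $x\mapsto -x$ with the curve's orientation reversed.

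\textbf{Shooting from the axis to the reflection line.} The profile of a smooth closed rotational $\mathbb S^n$ must leave the axis orthogonally. A standard power series/fixed point argument gives, for each $x_0$, a unique solution $\gamma_{x_0}$ with $\gamma_{x_0}(0)=(x_0,0)$ and $\theta(0)=\pi/2$, depending continuously (indeed smoothly) on $x_0$. Suppose $\gamma_{x_0}$ reaches the line $\{x=0\}$ orthogonally, i.e.\ with $\cos\theta=0$, while staying in $\{r>0\}$. Reflecting it across $\{x=0\}$ then yields a closed smooth profile meeting the axis orthogonally at both ends, hence a smooth immersed $\lambda$-self-expander diffeomorphic to $\mathbb S^n$. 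The choice $x_0=\lambda-\sqrt{\lambda^2-2n}$ (or the other radius) reproduces the round spheres. I want a different $x_0$ whose profile has made an extra loop, i.e.\ $\theta$ has turned by an additional $2\pi$ before the orthogonal hit. Such a curve self-intersects, giving non-embeddedness and non-roundness at once.

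\textbf{The intermediate value argument.} I would define the first time $s_*(x_0)$ at which $\gamma_{x_0}$ reaches $\{x=0\}$ after completing one full extra turn of $\theta$, and study the function $x_0\mapsto \cos\theta(s_*(x_0))$. The aim is to find two values $a<b$ for which this function has opposite signs, and to show that $s_*$ is well defined and continuous on $[a,b]$. Continuity requires transversality of the crossing and that $r$ stays bounded away from $0$ until $s_*$. The intermediate value theorem then produces the desired $x_0$.

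\textbf{The main obstacle: quantitative estimates for large $\lambda$.} This is where the hypothesis $\lambda\ge\Lambda_n$ enters. After rescaling by $\lambda$ near the axis, the equation is a small perturbation of a constant mean curvature equation: the small curvature-driven loop looks like a nodoid-type arc of curvature $\approx\lambda$. The linear term $\frac12\langle F,\nu\rangle$ then acts as a slowly varying drift that moves successive loops. I would compare $\theta'$ with explicit barriers $\lambda\pm C$ on regions where $|x|,r\le C'$, bound the term $(n-1)\cos\theta/r$ using lower bounds on $r$, and track how far the loop drifts in $x$ per revolution, to control both endpoint signs. The explicit constant $1000^n n^3$ suggests these estimates are crude Gronwall-type bounds iterated across the $n$-dependent curvature term. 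I expect the hardest part to be keeping $r$ uniformly away from $0$ throughout the loop while simultaneously pinning down the sign of $\cos\theta$ at the crossing for the two endpoint values $a$ and $b$.
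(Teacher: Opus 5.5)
\textbf{There is a genuine gap.} Your overall architecture matches the paper's: rotational reduction, shooting from the axis, reflection in $\{x=0\}$, the intermediate value theorem, and a large-$\lambda$ perturbation after rescaling. However, one step is wrong and the key idea is missing.

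\textbf{The reflection condition.} With $\gamma'=(\cos\theta,\sin\theta)$, meeting the vertical line $\{x=0\}$ orthogonally means $\sin\theta=0$, i.e.\ a horizontal tangent $(\pm1,0)$. The condition $\cos\theta=0$ means the curve is \emph{tangent} to the mirror, so the reflected curve has a cusp. Moreover, since $x'=\cos\theta$ then vanishes, the crossing of $\{x=0\}$ is not transversal exactly at the parameter you are looking for. So the continuity of $s_*$ that your intermediate value argument needs is not guaranteed. (Your curvature term also has the wrong sign: the centred spheres force $+(n-1)\cos\theta/r$.) The paper avoids transversality issues by using the angle as the clock. Since $k=\theta_t>0$ on the whole arc, $t_2(B)$ defined by $\theta=2\pi$ depends continuously on $B$, and the shooting function is $B\mapsto X(t_2(B))$. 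Note also that only an extra half turn is needed, not $2\pi$: the curve meets the mirror at the bottom of a tiny loop near the axis.

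\textbf{The estimate strategy.} The strategy you outline cannot work, and the idea that replaces it is absent. In unscaled variables the term $(n-1)\cos\theta/r$ is of the same order as $\lambda$ along the entire arc; the profile curvature is about $\lambda/n$, not $\lambda$. Near the axis this term is unbounded, and the curve genuinely approaches the axis: in scaled variables $Z(t_m)^n\sim\lambda^{-2}$. So neither barriers $\lambda\pm C$ nor a uniform lower bound on $r$ are available.

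\textbf{The missing mechanism: the flux $P$.} The paper uses $P=Z^{n-1}X_t+Z^n/n$, which is conserved by the limiting constant mean curvature equation.
\begin{itemize}
\item Starting on the axis forces $P\equiv0$, so the limit profile is exactly a semicircle of radius $n$ centred at $(B-n,0)$, not a nodoid. Its centre is a free parameter, and this is what produces the sign change $X(t_2)\approx B-2n$ on $[\frac{3n}{2},\frac{5n}{2}]$.
\item For finite $\lambda$, the identity $k=\frac1n+(n-1)P/Z^n+O(\lambda^{-2})$ converts the singular term into $P/Z^n$.
\item The first variation $\lambda^2P(t_{\delta_0})\approx A_{\delta_0}(B)$, essentially $(B-n)\int_0^\pi\sin^{n-1}\varphi\cos^2\varphi\,d\varphi>0$, governs the behaviour near the axis. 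Because $P<Z^n/n$ while $X_t<0$, positivity of $P$ gives $Z^n\ge nP>0$, so the curve cannot hit the axis. It also keeps $k>0$, forcing a vertical tangent $t_m$ and then a horizontal tangent $t_2$ with horizontal displacement at most $\frac14$.
\item This is why the shooting interval must have $B>n$: for $B<n$ the first variation has the opposite sign and the mechanism fails.
\end{itemize}
Without this flux argument, the step you identify as the hardest is left untouched, and it is essentially the whole proof.
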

Moreover, the examples obtained in Theorem~\ref{thm:main} can be chosen
to be mean convex. More precisely, we have the following consequence.

\begin{corollary}\label{cor:mean-convex}
For every sufficiently large $\lambda$, there exists a closed immersed $\lambda$-self-expander in $\mathbb R^{n+1}(n\ge 2)$ with $H>0$ which is diffeomorphic to $\mathbb S^n$ but is not a round sphere.
\end{corollary}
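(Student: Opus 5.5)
The plan is to derive the corollary directly from Theorem~\ref{thm:main}: take $\lambda\ge\Lambda_n$, let $F:\mathbb S^n\to\mathbb R^{n+1}$ be the closed, rotational, non-embedded, non-round immersed $\lambda$-self-expander the theorem provides, and prove that it automatically satisfies $H>0$. I will not use any finer property of the construction. Instead I will show that every closed immersed $\lambda$-self-expander with $\lambda>0$ is mean convex. Throughout I use the sign conventions of \eqref{def}, under which the round sphere $\mathbb S^n_r(0)$ with inward normal has $H=n/r$.

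\textbf{Step 1.} Since $M$ is compact, $|F|^2$ attains its maximum $R^2>0$ at some point $p$. At $p$ the immersion is tangent to the sphere $\mathbb S^n_R(0)$, so $\xi(p)=\pm F(p)/R$. Locally near $p$ the hypersurface lies inside the closed ball $\overline{B_R(0)}$.

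\textbf{Step 2.} Determine the sign of $\xi(p)$ using comparison with the sphere.
\begin{itemize}
\item Suppose $\xi(p)=-F(p)/R$, the inward choice. Comparing second fundamental forms at the tangency point, every principal curvature with respect to $\xi$ is at least $1/R$, so $H(p)\ge n/R$.
\item Suppose $\xi(p)=+F(p)/R$. Then $H(p)\le -n/R$ and $\langle F,\xi\rangle(p)=R$. Equation \eqref{def} gives $\lambda=H(p)-R/2<0$, which contradicts $\lambda>0$.
\end{itemize}
So the first case holds, and $\langle F,\xi\rangle(p)=-R$.

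\textbf{Step 3.} In the first case, \eqref{def} at $p$ gives
\[
\lambda=H(p)+\tfrac R2\ \ge\ \tfrac nR+\tfrac R2\ >\ \tfrac R2 .
\]
Hence $R<2\lambda$.

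\textbf{Step 4.} At an arbitrary point, Cauchy--Schwarz gives $\langle F,\xi\rangle\ge -|F|\ge -R$. Therefore
\[
H=\lambda+\tfrac12\langle F,\xi\rangle\ \ge\ \lambda-\tfrac R2\ >\ 0 .
\]
Together with Theorem~\ref{thm:main}, this proves the corollary for all $\lambda\ge\Lambda_n$.

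\textbf{Main obstacle.} The only delicate point is the comparison in Step~2. Only a local tangency is needed, since the immersed hypersurface lies in $\overline{B_R(0)}$ near $p$. The maximum principle then applies to the function $|F|^2$: its Hessian at $p$ is nonpositive, and this gives $\mathrm{Hess}\,|F|^2=2(g-R\,A_{\pm})\le0$ in the appropriate normal direction. This yields the bound on all principal curvatures.

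I also need to confirm that the orientation in Theorem~\ref{thm:main} is the one for which the equation holds with the given positive $\lambda$. If the construction instead produces a negative constant, the same argument applies with $\xi$ replaced by $-\xi$.
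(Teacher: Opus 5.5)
Your proof is correct, and it takes a different route from the paper. The paper argues from the specific construction. It asserts that the closed scaled profile satisfies $|X|+|Z|\le 5n$, so $|F_\lambda|\le 5n/\lambda$, and then $H=\lambda+\frac12\langle F_\lambda,\xi\rangle\ge\lambda-\frac{5n}{2\lambda}>0$ because $\lambda\ge\Lambda_n$.

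You instead prove a general fact about every closed immersed $\lambda$-self-expander with $\lambda>0$. At a maximum point $p$ of $|F|^2$, the identity $\nabla^2\frac12|F|^2=g+\langle F,\xi\rangle h$ rules out $\xi(p)=F(p)/R$ and gives $\lambda\ge\frac nR+\frac R2$. This is the same maximum-point argument that underlies the Ancari--Cheng bound $\lambda\ge\sqrt{2n}$. It follows that $H\ge\lambda-\frac R2\ge\frac nR>0$ everywhere.

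What your route buys is independence from the construction. Mean convexity is automatic for every closed example with positive $\lambda$, so the phrase ``can be chosen to be mean convex'' is superfluous. The corollary then follows from Theorem~\ref{thm:main} used as a black box, with no need to control the size of the profile along all of its branches.

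What the paper's route buys is sharper information specific to these examples, namely $H=\lambda+O(n/\lambda)$, so the constructed spheres are nearly of constant mean curvature. Your bound alone only gives $H\ge n/R>n/(2\lambda)$ unless you also import an estimate on $R$ from the construction.

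Your orientation concern is harmless. The reflected branch $t\mapsto\bigl(-X(2t_2-t),Z(2t_2-t)\bigr)$ again solves the scaled profile equation with the same constant. Hence the closed immersion satisfies the equation with the given positive $\lambda$ for the global normal $\xi=(-z_s,x_s\alpha)$.
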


In particular, mean convexity does not imply spherical rigidity for
closed immersed $\lambda$-self-expanders.

The proof of Theorem~\ref{thm:main} is based on a rotational shooting argument, inspired by \cite{drugan,liwei}. A continuity argument for the initial condition, together with the symmetry of the profile equation, produces the desired closed immersed hypersurface. In contrast, for $\lambda$-hypersurfaces, the Bernstein theorem ensures the existence of the first horizontal inflection point, while no analogous rigidity theorem is available in the literature for our case. 

The paper is organized as follows. In Section~2, we give the necessary facts about  the $\lambda$-self expanders and  derive the rotational  $\lambda$-self-expander equation.  In Sections~3, we have dealt with the solution of the equation when a tends towards infinity, which is the limit circle.
In Section~4, ~5 and ~6, we study the two relevant parts of the profile curve and establish the estimates required for the shooting argument, and  we choose the shooting parameter, complete the profile by reflection and prove Theorem~\ref{thm:main}. In Section~7, we give a numerical illustrations of the shooting construction in dimension $n = 2$.

\textbf{Acknowledgement.} I wish to extend my sincere gratitude to Prof. Ruijun Wu, Prof. Tongzhu Li and Prof. Yong Luo, for introducing me to these problems and for their valuable guidance throughout our discussions.

\textbf{AI disclosure.} The authors used AI tools to help improve the English exposition and to assist with the numerical computations in Section 7. The authors have reviewed and verified all numerical results and mathematical arguments, and take full responsibility for the content of the manuscript.

\section{Preliminaries}
In this section, we give the necessary facts about  the $\lambda$-self expanders in $\mathbb{R}^{n+1}$.
Let $F: M^n\to \mathbb{R}^{n+1}$ be an $n$-dimensional immersed  hypersurface in the  Euclidean space $\mathbb{R}^{n+1}$. Let $\{e_1,\cdots,e_n\}$ be a local orthonormal frame field of $F(M^n)$ with respect to the first fundamental form $I=\langle dF,dF\rangle$ with dual coframe field $\{\omega_1,\cdots,\omega_n\}$. Let $\xi$ be the unit normal vector field of $F$. In this paper we use the following conventions on the ranges of indices:
$$ 1\leq i,j,k,l\leq n.$$
Thus we have the structure equation of $F$,
\begin{equation}\label{eq-str}
\begin{split}
&dF=\sum_{i} \omega_{i}e_{i},\\
&de_{i}=\sum_{j} \omega_{ij}e_{j}+\sum_jh_{ij}\omega_j\xi,\\
&d\xi=-\sum_{i,j}h_{ij}\omega_je_{i},
\end{split}
\end{equation}
where $h_{ij}$ denote the components of the second fundamental form of $F$
 and $\omega_{ij}$ are the Levi-Civita connection forms of $I$ with respect to $\{\omega_1,\cdots,\omega_n\}$.

Let
$$ h=\sum_{i,j} h_{ij}\omega_{i}\otimes \omega_{j} $$
be the second fundamental form, then the squared norm of the second fundamental form $S=|h|^2=tr(h^2)=\sum_{i,j}(h_{ij})^{2}, $  and the mean curvature $H=tr(h)=\sum_{i} h_{ii}.$

The Gauss equations and the Codazzi equations of the hypersurface $F$ are given by
\begin{equation}\label{eq-1}
\begin{split}
&R_{ijkl}=h_{ik}h_{jl}-h_{il}h_{jk},~~1\leq i,j,k,l\leq n,\\
&h_{ij,k}=h_{ik,j}, ~~1\leq i,j,k\leq n,
\end{split}
\end{equation}
where the covariant derivative of $h_{ij}$ is defined by
$$\sum_kh_{ij,k}\omega_k=dh_{ij}+\sum_kh_{ik}\omega_{kj}+\sum_kh_{kj}\omega_{ki}.$$

By the Gauss equation (\ref{eq-1}), we obtain the Ricci curvature $R_{ij}$ and the scalar curvature $R$ of the hypersurface,
\begin{equation}\label{ricc}
\begin{split}
&R_{ij}=Hh_{ij}-\sum_{m}h_{im}h_{mj},\\
&R=H^2-S,
\end{split}
\end{equation}

It is well known that there exist no closed self-expanders in $\mathbb{R}^{n+1}$. But the situation is different for $\lambda\neq 0$. In \cite{ac},
S. Ancari and X. Cheng proved the following results,
\begin{theorem}\cite{ac}\label{le2-2}
Let $F:M^n\to \mathbb{R}^{n+1}$ be a smooth immersed closed $\lambda$-self-expander. Then,\\
(i)~ $\lambda\geq \sqrt{2n}$. Moreover, the sphere $\mathbb{S}^n(\sqrt{2n})$ is the only closed $\lambda$-self-expander with $\lambda=\sqrt{2n}$.\\
(ii)~ If $$S\leq -\frac{1}{2}+\frac{\lambda(\lambda-\sqrt{\lambda^2-2n})}{2n},$$ then $F$ is a sphere $\mathbb{S}^n(r)$ with $r=\lambda+\sqrt{\lambda^2-2n}$.
\end{theorem}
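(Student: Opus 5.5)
We treat the two parts separately; both rest on integral identities for closed hypersurfaces together with the relation $\langle F,\xi\rangle=2(H-\lambda)$ from \eqref{def}. We orient $F$ so that $\lambda>0$, since replacing $\xi$ by $-\xi$ changes $(H,\lambda)$ to $(-H,-\lambda)$.

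For (i) we use the Minkowski-type identity. With the inward normal, $\Delta F=H\xi$, so
\[
\tfrac12\Delta|F|^2=n+H\langle F,\xi\rangle=n+2H^2-2\lambda H .
\]
Integrating over the closed manifold $M$ gives $2\lambda\int_M H=n|M|+2\int_M H^2$. Set $a=\frac{1}{|M|}\int_M H$. Then $a>0$, and Cauchy--Schwarz gives $2\lambda a\ge n+2a^2$. Hence, by AM--GM,
\[
\lambda\ge \frac{n}{2a}+a\ge \sqrt{2n}.
\]
In the equality case $\lambda=\sqrt{2n}$, equality in Cauchy--Schwarz forces $H\equiv a$ to be constant, so $\langle F,\xi\rangle=2(H-\lambda)$ is constant as well. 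Then $\frac12\Delta|F|^2=n+H\langle F,\xi\rangle$ is a constant with zero integral, hence identically $0$. So $|F|^2$ is harmonic, hence constant, and $F(M)$ is a round sphere centered at the origin. The relation $\lambda=\frac nr+\frac r2=\sqrt{2n}$ then forces $r=\sqrt{2n}$.

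For (ii) we run a Simons-type integral argument with the drift operator $\mathcal L=\Delta+\frac12\langle F,\nabla\,\cdot\,\rangle$, which is self-adjoint for the weight $e^{|F|^2/4}$. The steps are as follows.
\begin{itemize}
\item Differentiate \eqref{def} twice, using the structure equations \eqref{eq-str} and the Codazzi equations \eqref{eq-1}, to get $h_{ij,k}=\frac12\sum_l h_{il}\langle F,e_l\rangle\ldots$-type identities.
\item Derive formulas of the form $\mathcal L H=-(S+\frac12)H+\lambda S$ and $\frac12\mathcal L S=|\nabla h|^2-S(S+\frac12)+\lambda\,\mathrm{tr}(h^3)$. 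The signs are to be checked carefully for the expander case.
\item Integrate against $e^{|F|^2/4}$, which kills the $\mathcal L$-terms.
\item Combine the two identities to eliminate or control the cubic term $\lambda\,\mathrm{tr}(h^3)$. We write $h=\frac Hn g+\mathring h$ and use Okumura's inequality $|\mathrm{tr}\,\mathring h^3|\le \frac{n-2}{\sqrt{n(n-1)}}|\mathring h|^3$, together with $H^2\le nS$.
\end{itemize}

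It is useful to note that $\frac{\lambda(\lambda-\sqrt{\lambda^2-2n})}{2n}=\frac{\lambda}{r_+}$ with $r_+=\lambda+\sqrt{\lambda^2-2n}$. On the sphere of radius $r_+$ one has $S=\frac n{r_+^2}=\frac{\lambda}{r_+}-\frac12$, so the hypothesis is exactly $S\le S_{\text{sphere}}$. The goal is therefore an integral inequality
\[
0\ge\int_M\Big(|\nabla h|^2+c\,|\mathring h|^2\Big)e^{|F|^2/4}, \qquad c\ge 0 \text{ under the pinching.}
\]
This forces $\nabla h=0$ and $\mathring h=0$, so $F$ is totally umbilic, i.e. a sphere. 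Part (i) and the centering remark then give a centered sphere. Among the two admissible radii, the pinching rules out $r_-$, because $\frac n{r_-^2}>\frac n{r_+^2}$.

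The main obstacle is the last combination step. The cubic term $\lambda\,\mathrm{tr}(h^3)$ has no sign, and it has to be absorbed using only the upper bound on $S$. We would first try multiplying the $\mathcal L H$ identity by a suitable constant (like $\lambda$ or $\frac{H}{n}$-weighted test functions). If that fails to close, we would fall back on the pointwise estimate $\lambda\,\mathrm{tr}(h^3)\le \lambda\sqrt{S}\,S$ combined with $H\le \sqrt{nS}$, and check that the threshold $\frac{\lambda}{r_+}-\frac12$ is exactly where the resulting quadratic in $\sqrt S$ changes sign.
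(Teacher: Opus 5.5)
Your part (i) is correct: the identity $\tfrac12\Delta|F|^2=n+2H^2-2\lambda H$, Cauchy--Schwarz and AM--GM give $\lambda\ge\sqrt{2n}$, and your equality analysis is sound. The paper only quotes this theorem from Ancari--Cheng, so there is no proof in the paper to compare with.

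Part (ii), however, is not proved. The decisive combination step is left open, and the fallback you propose points the wrong way. Integrating your formula for $\tfrac12\mathcal L S$ against $e^{|F|^2/4}$ gives
\[
\int_M|\nabla h|^2e^{|F|^2/4}=\int_M\Bigl(S\bigl(S+\tfrac12\bigr)-\lambda\,\mathrm{tr}(h^3)\Bigr)e^{|F|^2/4}.
\]
To conclude you need the right-hand integrand to be $\le 0$, i.e.\ a \emph{lower} bound $\lambda\,\mathrm{tr}(h^3)\ge S(S+\frac12)$. The estimate $\mathrm{tr}(h^3)\le S^{3/2}$ is an upper bound, so it only bounds the right-hand side from below.

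Worse, the pinching pushes the integrand to the wrong sign. At an umbilic point,
\[
S\bigl(S+\tfrac12\bigr)-\lambda\,\mathrm{tr}(h^3)=\tfrac{H^2}{2n^2}\bigl(2H^2-2\lambda H+n\bigr).
\]
The hypothesis forces $H\le n/r_+$, the smaller root of $2H^2-2\lambda H+n$, so this quantity is $\ge 0$. For expanders the drift term $-\frac12 S$ has the bad sign. Nothing in your outline produces the target inequality $0\ge\int(|\nabla h|^2+c|\mathring h|^2)e^{|F|^2/4}$.

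The fix is already in your hands. You correctly noted that the threshold equals $n/r_+^2$. Then:
\begin{itemize}
\item $H^2\le nS\le n^2/r_+^2$ gives $H\le n/r_+=\tfrac12\bigl(\lambda-\sqrt{\lambda^2-2n}\bigr)$, the smaller root of $2H^2-2\lambda H+n$.
\item Hence the integrand in your part-(i) identity $\int_M(n+2H^2-2\lambda H)=0$ is pointwise nonnegative. It therefore vanishes identically, and $H\equiv n/r_+$.
\item Consequently $\tfrac12\Delta|F|^2\equiv 0$, so $|F|$ is constant.
\item Finally $\langle F,\xi\rangle=2(H-\lambda)=-r_+$, so $F(M)$ is the centered sphere of radius $r_+$.
\end{itemize}
No Simons identity or Okumura inequality is needed.
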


Let $\mathbb{R}^2_+=\{(x,z)\in \mathbb{R}^2|z>0\}$ denote the upper half-plane.  Let $SO(n)$
denote the special orthogonal group and acts on $\mathbb{R}^{n+1}=\{(x,y)|x\in \mathbb{R},~~y\in \mathbb{R}^n\}$
by $$A(x,y)=(x, Ay), A\in SO(n).$$
Thus  the space of orbits $\mathbb{R}^{n+1}/SO(n)$ is
the closed upper half plane $$\overline{\mathbb{R}^2_+}=\{(x,z)\in \mathbb{R}^2|z\geq0\}.$$
Let $\pi: \mathbb{R}^{n+1}\to \overline{\mathbb{R}^2_+}$ be the the projection, $\pi(x,y)=(x,|y|)$.

If a hypersurface $M^n$ is invariant under the action of $SO(n)$, then the projection $\pi(M^n)$
give  a profile curve in the upper half plane, which can be parametrized by Euclidean arc length
and written as $\Gamma(s)=(x(s), z(s))$. Conversely, if we have a curve $\Gamma(s)=(x(s), z(s)), s\in (a,b)$
 parametrized by Euclidean arc length in $\mathbb{R}^2_+$, then we can reconstruct the hypersurface
by
\begin{equation}\label{hyper}
\begin{split}
&F: (a,b)\times \mathbb{S}^{n-1}\to \mathbb{R}^{n+1},\\
&F(s, \alpha)=(x(s), z(s)\alpha).
\end{split}
\end{equation}
Since the profile curve is parametrized by arc length, then
\begin{equation}\label{eq:arclength}
x_s^2+z_s^2=1.
\end{equation}
The inward unit normal $\xi$ of the hypersurface $F$ can be written by
\begin{equation}\label{eq:normal}
\xi=(-z_s,x_s\alpha),
\end{equation}
we have
\[
H=x_sz_{ss}-x_{ss}z_s-\frac{n-1}{z}x_s,
\qquad
\langle F, \xi \rangle=-xz_s+zx_s.
\]
Thus \eqref{def} is equivalent to
\begin{equation}\label{eq:unscaled-profile}
x_sz_{ss}-x_{ss}z_s-
\left(\frac{n-1}{z}+\frac z2\right)x_s
+\frac x2z_s=\lambda.
\end{equation}
We impose the initial data
\begin{equation}\label{eq:initial-unscaled}
x(0)=b,\qquad z(0)=0,\qquad x_s(0)=0,\qquad z_s(0)=1.
\end{equation}

Set
\begin{equation}\label{eq:scaling}
X=\lambda x,\qquad Z=\lambda z,\qquad t=\lambda s,
\qquad B=\lambda b.
\end{equation}
Then
\begin{equation}\label{eq:unit-speed}
X_t^2+Z_t^2=1.
\end{equation}

Thus the $\lambda$-self-expander equation is written by
\begin{equation}\label{eq:scaled-profile}
X_tZ_{tt}-X_{tt}Z_t-\frac{n-1}{Z}X_t
+\frac1{2\lambda^2}(XZ_t-ZX_t)=1,
\end{equation}
with the initial data
\begin{equation}\label{eq:scaled-initial}
X(0)=B,\qquad Z(0)=0,\qquad X_t(0)=0,\qquad Z_t(0)=1.
\end{equation}

Write
\begin{equation}\label{eq:angle}
X_t=\cos\theta,\qquad Z_t=\sin\theta,
\qquad \theta(0)=\frac\pi2.
\end{equation}
The signed curvature
\[
k:=X_tZ_{tt}-X_{tt}Z_t
\]
satisfies $k=\theta_t$, and \eqref{eq:scaled-profile} becomes
\begin{equation}\label{eq:k-direct}
k=1+\frac{n-1}{Z}X_t
-\frac1{2\lambda^2}(XZ_t-ZX_t).
\end{equation}
Since \eqref{eq:unit-speed} holds, then
\begin{equation}\label{eq:acceleration}
X_{tt}=-kZ_t,\qquad Z_{tt}=kX_t.
\end{equation}

\section{The limiting circle}
We consider the limiting circle as $\lambda\to \infty$, and then use perturbation methods to find the solution of \eqref{eq:unscaled-profile} near the limit circle.

As $\lambda\to\infty$, \eqref{eq:scaled-profile} formally reduces to
\begin{equation}\label{eq:limiting-equation}
X_tZ_{tt}-X_{tt}Z_t-\frac{n-1}{Z}X_t=1,
\qquad X_t^2+Z_t^2=1.
\end{equation}
A direct calculation gives the first integral
\[
\frac{d}{dt}\left(Z^{n-1}X_t+\frac{Z^n}{n}\right)=0.
\]
Since $Z(0)=0$ and $n\ge2$, the constant is zero, and therefore
\begin{equation}\label{eq:limiting-Xt}
X_t=-\frac Zn.
\end{equation}
Consequently $k=1/n$ and
\[
\theta(t)=\frac\pi2+\frac tn.
\]
Using the initial data gives the limiting solution
\begin{equation}\label{eq:limiting-solution}
X_0(t)=B-n+n\cos\frac tn,
\qquad
Z_0(t)=n\sin\frac tn.
\end{equation}
Hence
\begin{equation}\label{eq:limiting-circle}
\bigl(X_0-(B-n)\bigr)^2+Z_0^2=n^2.
\end{equation}
Thus the limiting profile is the upper semicircle of radius $n$ centered at
$(B-n,0)$, beginning at $(B,0)$ and ending at $(B-2n,0)$.

For the quantitative shooting argument we now fix once and for all
\begin{equation}\label{eq:B-range}
\frac{3n}{2}\le B\le\frac{5n}{2}.
\end{equation}
We have,
\begin{equation}\label{eq:B-positive}
B-n\ge\frac n2>0.
\end{equation}

Define, exactly as in the original shooting argument,
\begin{equation}\label{eq:P-def}
P:=Z^{n-1}X_t+\frac{Z^n}{n}.
\end{equation}
For the limiting circle $P\equiv0$.

\begin{lemma}\label{lem:P-identities}
Along every solution of \eqref{eq:scaled-profile} with $Z>0$,
\begin{align}
P_t&=\frac1{2\lambda^2}Z^{n-1}Z_t(XZ_t-ZX_t),
\label{eq:P-t}\\
k&=\frac1n+(n-1)\frac{P}{Z^n}
-\frac1{2\lambda^2}(XZ_t-ZX_t).
\label{eq:k-P}
\end{align}
If $Z_t\ne0$, then
\begin{equation}\label{eq:P-Z}
\frac{dP}{dZ}
=\frac1{2\lambda^2}Z^{n-1}(XZ_t-ZX_t).
\end{equation}
\end{lemma}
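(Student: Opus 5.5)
The three identities follow by direct differentiation, using only \eqref{eq:unit-speed}, \eqref{eq:acceleration} and the curvature formula \eqref{eq:k-direct}. I will derive them in the order \eqref{eq:P-t}, then \eqref{eq:k-P}, then \eqref{eq:P-Z}.

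\textbf{Step 1: the derivative of $P$.} Differentiating \eqref{eq:P-def} gives
\[
P_t=(n-1)Z^{n-2}Z_tX_t+Z^{n-1}X_{tt}+Z^{n-1}Z_t .
\]
By \eqref{eq:acceleration} we substitute $X_{tt}=-kZ_t$. This factors as
\[
P_t=Z^{n-1}Z_t\Bigl(\frac{n-1}{Z}X_t-k+1\Bigr).
\]
Now \eqref{eq:k-direct} shows that the bracket equals $\frac1{2\lambda^2}(XZ_t-ZX_t)$, which is exactly \eqref{eq:P-t}.

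\textbf{Step 2: $k$ in terms of $P$.} From \eqref{eq:P-def}, since $Z>0$,
\[
\frac{X_t}{Z}=\frac{P}{Z^n}-\frac1n .
\]
Substituting this into \eqref{eq:k-direct} gives
\[
k=1+(n-1)\frac{P}{Z^n}-\frac{n-1}{n}-\frac1{2\lambda^2}(XZ_t-ZX_t).
\]
Since $1-\frac{n-1}{n}=\frac1n$, this is \eqref{eq:k-P}.

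\textbf{Step 3: $P$ as a function of $Z$.} Where $Z_t\neq0$, the map $t\mapsto Z$ is locally invertible. The chain rule then gives $dP/dZ=P_t/Z_t$. Dividing \eqref{eq:P-t} by $Z_t$ yields \eqref{eq:P-Z}.

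There is no real obstacle here. The only points needing care are that $Z>0$ is used to divide by $Z$ and $Z^n$, and that $Z_t\neq0$ is used for the local inversion in Step 3.
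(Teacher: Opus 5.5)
Your proposal is correct and follows the paper's proof essentially step for step. You differentiate $P$, substitute $X_{tt}=-kZ_t$ and use \eqref{eq:k-direct} to get \eqref{eq:P-t}. You rewrite $X_t/Z=P/Z^n-\frac1n$ in \eqref{eq:k-direct} to get \eqref{eq:k-P}, and you divide by $Z_t$ to get \eqref{eq:P-Z}.
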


\begin{proof}
Differentiating \eqref{eq:P-def} and using
$X_{tt}=-kZ_t$ gives
\[
P_t=Z^{n-1}Z_t
\left(\frac{n-1}{Z}X_t-k+1\right),
\]
and \eqref{eq:P-t} follows from \eqref{eq:k-direct}.  Moreover,
\[
\frac{X_t}{Z}=\frac{P}{Z^n}-\frac1n,
\]
which substituted into \eqref{eq:k-direct} gives \eqref{eq:k-P}.
Equation \eqref{eq:P-Z} follows from \eqref{eq:P-t} whenever $Z_t\ne0$.
\end{proof}

We shall repeatedly use the following elementary consequences.  For every $n\ge2$ and
$\lambda\ge\Lambda_n$,
\begin{align}
\frac{10n}{\lambda^2}&\le\frac1{40n},
\label{eq:numeric-1}\\
\frac{60n^3 800^n}{\lambda^2}&\le\frac n{400},
\label{eq:numeric-2}\\
\frac{1000n^4(1600n)^n}{\lambda^2}
&\le\frac{n^{n-1/2}}{128},
\label{eq:numeric-3}\\
\frac{7n^{n+2}}{\lambda^2}&\le 8^{-n},
\label{eq:numeric-4}\\
\frac{15n^{n+1}}{\lambda^2}&\le 16^{-(n-1)}.
\label{eq:numeric-5}
\end{align}
These inequalities are very far from sharp; they are included only to make
all later thresholds explicit.

\section{The first part of the profile curve}

Since $Z_t(0)=1$, the profile curve can be written near the initial point as
a graph
\[
X=\gamma(Z).
\]
Then
\[
Z_t=\frac1{\sqrt{1+\gamma'^2}},\qquad
X_t=\frac{\gamma'}{\sqrt{1+\gamma'^2}},\qquad
k=-\frac{\gamma''}{(1+\gamma'^2)^{3/2}},
\]
and \eqref{eq:scaled-profile} becomes
\begin{equation}\label{eq:gamma}
\frac{\gamma''}{1+\gamma'^2}
=-\frac{n-1}{Z}\gamma'-\sqrt{1+\gamma'^2}
+\frac1{2\lambda^2}(\gamma-Z\gamma').
\end{equation}
The initial conditions are
\begin{equation}\label{eq:gamma-initial}
\gamma(0)=B,\qquad \gamma'(0)=0.
\end{equation}

In \cite{drugan}, Drugan have obtained the existence of solutions of self-shrinker near $z = 0$ by discussing the existence, uniqueness and continuous dependence of solutions on initial height at $z$ near $0$. That's also true for $\lambda$-self-expander.

\begin{lemma}
    When $\frac{3n}{2}\le B\le\frac{5n}{2}$, we have $X_{tt}(0)<0$ for sufficiently large $\lambda$.
\end{lemma}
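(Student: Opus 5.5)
Since $X_{tt}=-kZ_t$ by \eqref{eq:acceleration} and $Z_t(0)=1$, we have $X_{tt}(0)=-k(0)$. It therefore suffices to show $k(0)>0$, which amounts to computing the curvature at the axis point $(B,0)$. The term $\frac{n-1}{Z}X_t$ in \eqref{eq:k-direct} is singular at $t=0$. We resolve it by the usual L'H\^opital argument for rotational hypersurfaces, using the graph representation $X=\gamma(Z)$ with $\gamma(0)=B$ and $\gamma'(0)=0$. Here we take for granted, as the paper does following \cite{drugan}, that the solution of \eqref{eq:gamma}--\eqref{eq:gamma-initial} exists and is $C^2$ up to $Z=0$.

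The key step is to pass to the limit $Z\to0$ in \eqref{eq:gamma}. Since $\gamma'(0)=0$, we get $\gamma'(Z)/Z\to\gamma''(0)$. The left side tends to $\gamma''(0)$, and the right side tends to
\[
-(n-1)\gamma''(0)-1+\frac{B}{2\lambda^2}.
\]
Hence
\[
n\,\gamma''(0)=-1+\frac{B}{2\lambda^2},\qquad \gamma''(0)=-\frac1n\Bigl(1-\frac{B}{2\lambda^2}\Bigr).
\]
Equivalently, evaluating \eqref{eq:k-direct} at $t=0$ gives $k(0)=1+(n-1)k(0)-\frac{B}{2\lambda^2}$, using $X_t/Z\to -k(0)$ because $X_t\approx -k(0)t$ and $Z\approx t$. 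This again yields $k(0)=\frac1n\bigl(1-\frac{B}{2\lambda^2}\bigr)$. Finally, $X_{tt}(0)=\gamma''(0)$ at the initial point, since $\gamma'(0)=0$ and $Z_t(0)=1$.

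To conclude, note that $B\le 5n/2$, so $\frac{B}{2\lambda^2}\le\frac{5n}{4\lambda^2}$. By \eqref{eq:numeric-1}, for $\lambda\ge\Lambda_n$ this is at most $\frac{1}{320 n}<1$. Therefore
\[
X_{tt}(0)=-\frac1n\Bigl(1-\frac{B}{2\lambda^2}\Bigr)<0,
\]
and in fact $X_{tt}(0)$ is close to the limiting value $-1/n$ of the circle \eqref{eq:limiting-solution}.

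The only delicate point is justifying the limit $\gamma'(Z)/Z\to\gamma''(0)$, i.e. regularity of $\gamma$ at the singular endpoint $Z=0$. I would handle it by writing \eqref{eq:gamma} in the integrated form $(Z^{n-1}\gamma'/\sqrt{1+\gamma'^2})'=Z^{n-1}(\cdots)$. This form shows directly that $\gamma'(Z)=O(Z)$ and that $\gamma'(Z)/Z$ has the stated limit.
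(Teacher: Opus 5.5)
Your proposal is correct and follows the paper's argument. You pass to the limit $Z\to0$ in \eqref{eq:gamma}, evaluate the singular term via $\gamma'(Z)/Z\to\gamma''(0)$, and obtain $X_{tt}(0)=\gamma''(0)=\frac1n\bigl(\frac{B}{2\lambda^2}-1\bigr)<0$. Your integrated form is essentially the identity \eqref{eq:P-Z} for $P$, and it supplies the justification of that limit which the paper leaves implicit.

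There is one slip in your ``equivalently'' check: the singular term contributes $-(n-1)k(0)$, not $+(n-1)k(0)$. It is the minus sign that yields $nk(0)=1-\frac{B}{2\lambda^2}$.
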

\begin{proof}
    Since $X(t)=\gamma(Z(t))$, $Z_t(0)=1$ and $\gamma'(0)=0$, Substituting these into (\ref{eq:gamma}) yields 
\begin{equation}\label{eq:Xtt0}
X_{tt}(0)=\gamma''(0)
=\frac{B/(2\lambda^2)-1}{n}<0,
\end{equation}
because $B\le5n/2$ and $\lambda\ge\Lambda_n$.
\end{proof}
Thus the profile initially bends to the left. Let $t_1$ be the first time, if it exists, such that
\begin{equation}\label{eq:t1-def}
Z_t(t_1)=0,\qquad X_t(t_1)=-1,
\end{equation}
or equivalently $\theta(t_1)=\pi$. In the case of $\lambda$-hypersurfaces, the existence of $t_1$ is guaranteed by the Bernstein theorem \cite{cw3}. While no such analogous result has yet been established for $\lambda$-self-expanders, we establish the existence of $t_1$ here.

\begin{proposition}\label{prop:first-branch}
For every $n\ge2$, every $\lambda\ge\Lambda_n$, and every $B$ satisfying $\frac{3n}{2}\le B\le\frac{5n}{2}$, the time $t_1$ exists.  On $(0,t_1)$,
\begin{equation}\label{eq:first-signs}
X_t<0,\qquad Z_t>0,\qquad k>0,
\end{equation}
and hence $X_{tt}<0$.  Moreover,
\begin{align}
\left|t_1-\frac{n\pi}{2}\right|
&\le\frac{10\pi n^3}{\lambda^2},
\label{eq:t1-explicit}\\
|Z(t_1)-n|
&\le\frac{10\pi n^3}{\lambda^2},
\label{eq:Z1-explicit}\\
|X(t_1)-(B-n)|
&\le\frac{10\pi n^3}{\lambda^2}.
\label{eq:X1-explicit}
\end{align}
In particular,
\begin{equation}\label{eq:t1-coarse}
\frac{9n}{10}<Z(t_1)<\frac{11n}{10},
\qquad X(t_1)>\frac{2n}{5}.
\end{equation}
\end{proposition}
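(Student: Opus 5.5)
We compare the solution with the limiting semicircle \eqref{eq:limiting-solution} through the quantity $P$ of \eqref{eq:P-def}. The argument is a bootstrap on the maximal interval $[0,T)$ on which $X_t<0$, $Z_t>0$, $k>0$ and the curve stays in a fixed box, say $|X|\le 4n$ and $0<Z\le 2n$. The opening step is a local one near $t=0$. The graph formulation \eqref{eq:gamma} with \eqref{eq:gamma-initial} gives a solution for small $Z$. By \eqref{eq:Xtt0}, $X_{tt}(0)<0$, and $k(0)=-\gamma''(0)>0$ (close to $1/n$), so the signs \eqref{eq:first-signs} hold on some $(0,\epsilon)$. Near $Z=0$ we have $P=O(Z^{n+1}/\lambda^2)$ by \eqref{eq:P-Z}, so $P/Z^n$ is small there.

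The main estimate controls $P$. While $Z_t>0$, $Z$ is a valid parameter, and \eqref{eq:P-Z} with $|XZ_t-ZX_t|\le |X|+Z\le 6n$ in the box gives
\[
|P(Z)|\le \frac{6n}{2\lambda^2}\int_0^Z \zeta^{n-1}\,d\zeta=\frac{3Z^n}{\lambda^2}.
\]
Hence $|P|/Z^n\le 3/\lambda^2$. Inserting this into \eqref{eq:k-P} yields
\[
\Bigl|k-\frac1n\Bigr|\le \frac{3(n-1)}{\lambda^2}+\frac{3n}{\lambda^2}\le\frac{6n}{\lambda^2}.
\]
By \eqref{eq:numeric-1} this is far smaller than $1/n$. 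Consequently $k>0$, and $\theta_t=k$ lies in $[\frac1n-\frac{6n}{\lambda^2},\frac1n+\frac{6n}{\lambda^2}]$. Therefore $\theta$ increases strictly from $\pi/2$ and must reach $\pi$ at a finite time $t_1$ with $|t_1-n\pi/2|\le n\pi\cdot 6n^2/\lambda^2\cdot(1+o(1))$, which is within $10\pi n^3/\lambda^2$. On $(0,t_1)$ we have $\theta\in(\pi/2,\pi)$, and this gives $X_t<0$ and $Z_t>0$. Since $X_{tt}=-kZ_t$, we also get $X_{tt}<0$.

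Closing the bootstrap needs two things: that the box is never left before $t_1$, and the position estimates. Write $\theta(t)=\pi/2+t/n+\delta(t)$ with $|\delta(t)|\le 6nt/\lambda^2\le 3\pi n^2(1+o(1))/\lambda^2$ for $t\le t_1$. Integrating $X_t=\cos\theta$ and $Z_t=\sin\theta$ against the limit \eqref{eq:limiting-solution} gives $|X-X_0|$ and $|Z-Z_0|$ each bounded by $\int_0^t|\delta|\le 3\pi n^3(1+o(1))\cdot\frac{\pi}{2}/\lambda^2$. We then account for the difference $t_1$ versus $n\pi/2$, using the Lipschitz bound $|X_0'|,|Z_0'|\le1$. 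Together these give \eqref{eq:Z1-explicit} and \eqref{eq:X1-explicit} with the stated constant $10\pi n^3/\lambda^2$, up to careful bookkeeping of the crude constants. These bounds keep the curve well inside the box, since $X\in[B-2n,B]\subset[-n/2,5n/2]$ up to tiny errors and $Z\le n+o(1)$, so $T>t_1$ and the bootstrap closes. The coarse bounds \eqref{eq:t1-coarse} follow from \eqref{eq:B-positive}, \eqref{eq:numeric-1} and $\lambda\ge\Lambda_n$.

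The delicate point is the behaviour at $Z=0$. Both the use of \eqref{eq:P-Z} and the singular term $(n-1)X_t/Z$ in \eqref{eq:k-direct} require that $P/Z^n$ be controlled uniformly as $Z\to0$. I would handle this via the graph solution $\gamma$ from Drugan's local existence argument. Smoothness of $\gamma$ at $0$ with $\gamma'(0)=0$ gives $P=O(Z^{n+1})$. The integral bound above is then valid from $Z=0$, and the bootstrap starts cleanly.
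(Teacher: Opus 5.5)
Your proof is correct, and its core is the same as the paper's. You bound $P$ by integrating \eqref{eq:P-Z} in $Z$ from $P(0)=0$, insert this into \eqref{eq:k-P} to get $|k-\frac1n|=O(n/\lambda^2)$, and conclude that $\theta$ increases to $\pi$. The paper closes the bootstrap with the condition $t\le n\pi$, which gives the box $|X|<6n$, $Z<4n$ for free by unit speed. Your box condition also closes: before $\theta$ reaches $\pi$ one has $t<2n$, $X$ decreases from $B$, and $Z\le t$.

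Where you differ is the position estimates. You compare $\theta(t)$ with $\frac\pi2+\frac tn$ at equal times. You then need a separate time-shift term $|X_0(t_1)-X_0(\frac{n\pi}2)|\le|t_1-\frac{n\pi}2|$. The paper instead uses $\theta$ as the parameter and writes $t_1=\int_{\pi/2}^{\pi}\frac{d\theta}{k}$ and $Z(t_1)-n=\int_{\pi/2}^{\pi}\sin\theta\,(\frac1k-n)\,d\theta$, and similarly for $X$. The two curves are then compared at equal angles. All three endpoint errors follow in one line from $|\frac1k-n|\le\frac{20n^3}{\lambda^2}$, with no shift term.

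Your bookkeeping caveat matters. With your intermediate constants as written, $|t_1-\frac{n\pi}{2}|\le\frac{6\pi n^3}{\lambda^2}$ plus $\int_0^{t_1}|\delta|\le\frac{3\pi^2n^3}{2\lambda^2}$ sums to about $33.7\,n^3/\lambda^2$. That slightly exceeds $10\pi n^3/\lambda^2\approx31.4\,n^3/\lambda^2$. Either of two refinements fixes it:
\begin{itemize}
\item use $|t_1-\frac{n\pi}2|=n|\delta(t_1)|\le\frac{6n^2t_1}{\lambda^2}\le\frac{3\pi n^3}{\lambda^2}(1+o(1))$, i.e.\ $t_1\approx\frac{n\pi}{2}$ rather than $n\pi$;
\item or use $\int_0^{t}|\delta|\le\frac{3nt^2}{\lambda^2}$.
\end{itemize}

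Your concern at $Z=0$ is simpler than you make it. For $n\ge2$ one has $|P|\le Z^{n-1}+\frac{Z^n}{n}\to0$, and $dP/dZ$ is bounded, so integrating from $Z=0$ needs no $O(Z^{n+1})$ expansion of $\gamma$.
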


\begin{proof}
Consider the maximal interval on which
\[
\frac\pi2\le\theta<\pi,\qquad Z>0,\qquad t\le n\pi.
\]
Since the curve has unit speed and $\frac{3n}{2}\le B\le \frac{5n}{2}$, then
\[
|X|\le B+t<6n,\qquad 0<Z\le t<4n.
\]
Hence
\begin{equation}\label{eq:Q-first}
|XZ_t-ZX_t|\le |X|+Z<10n.
\end{equation}
Because $Z_t>0$, \eqref{eq:P-Z} and $P(0)=0$ imply
\begin{equation}\label{eq:P-first-explicit}
|P(t)|
\le\frac{10n}{2\lambda^2}
\int_0^{Z(t)}r^{n-1}\,dr
=\frac{5}{\lambda^2}Z(t)^n.
\end{equation}
Substituting \eqref{eq:Q-first} and \eqref{eq:P-first-explicit} into
\eqref{eq:k-P},
\begin{equation}\label{eq:k-first-explicit}
\left|k-\frac1n\right|
\le\frac{5(n-1)}{\lambda^2}+\frac{5n}{\lambda^2}
<\frac{10n}{\lambda^2}.
\end{equation}
By \eqref{eq:numeric-1},
\[
k\ge\frac1n-\frac1{40n}>\frac1{2n}.
\]
Thus $\theta_t=k>0$.  If $\theta$ did not reach $\pi$ before $t=n\pi$,
then
\[
\theta(n\pi)\ge\frac\pi2+\frac{n\pi}{2n}=\pi,
\]
a contradiction.  Therefore $t_1$ exists and \eqref{eq:first-signs} holds.
Moreover $X_{tt}=-kZ_t<0$.

Since $k\ge \frac{1}{2n}$, \eqref{eq:k-first-explicit} gives
\begin{equation}\label{eq:inverse-k-first}
\left|\frac1k-n\right|
=\frac{n|k-\frac{1}{n}|}{k}
\le\frac{20n^3}{\lambda^2}.
\end{equation}
Since $\frac{dt}{d\theta}=\frac1k$, integrating from
$\frac{\pi}{2}$ to $\pi$ yields $t_1=\int_{\frac{\pi}{2}}^\pi\frac1k\,d\theta.$ On the other hand, $\frac{n\pi}{2}=\int_{\frac{\pi}{2}}^\pi n\,d\theta.$
Subtracting these two identities and taking absolute values,
we obtain
\begin{equation*}
\left|t_1-\frac{n\pi}{2}\right| \le\int_{\frac{\pi}{2}}^\pi \left|\frac1k-n\right|d\theta
\le\frac{20n^3}{\lambda^2} \int_{\frac{\pi}{2}}^\pi d\theta
=\frac{10\pi n^3}{\lambda^2}.
\end{equation*}
Similarly, we obtain
\begin{align*}
|Z(t_1)-n|
&\le\int_{\frac{\pi}{2}}^{\pi}
|\sin\theta|\left|\frac1k-n\right|\,d\theta
\le\frac{10\pi n^3}{\lambda^2},\\
|X(t_1)-(B-n)|
&\le\int_{\frac{\pi}{2}}^{\pi}
|\cos\theta|\left|\frac1k-n\right|\,d\theta
\le\frac{10\pi n^3}{\lambda^2}.
\end{align*}
Since $\lambda\ge\Lambda_n\ge20n$, the right-hand side is less than
$\frac{n}{10}$.  This yields \eqref{eq:t1-coarse}.
\end{proof}
Proposition \ref{prop:first-branch} provides quantitative estimates for the location of the first horizontal tangent, showing that $t_1$, $Z(t_1)$, and $X(t_1)$ are close to $\frac{n\pi}{2}$, $n$, and $B-n$, respectively.
On the first branch $Z_t>0$, so it may be written as $X=\gamma(Z)$ and
\[
\gamma'(Z)=\frac{X_t}{Z_t}<0,
\qquad
\gamma'(Z)\longrightarrow-\infty
\quad\text{as }t\to t_1.
\]

\section{The second part of the profile curve}

Immediately after $t_1$, one has $Z_t<0$.  The second branch can therefore be written locally as $X=\beta(Z)$.  We first move to a fixed angle before the singular axis, then use the first nonzero variation of $P$, then reach the vertical tangent, and finally continue to the second horizontal tangent.

For the quantitative argument we fix
\begin{equation}\label{eq:delta0}
\delta_0=\frac1{100}.
\end{equation}
The choice $\delta_0=\frac{1}{100}$ is sufficient to ensure that all the estimates required in the subsequent arguments hold, although it is conservative and not optimal.
Let $t_{\delta_0}$ denote the first time, if it exists, such that
\begin{equation}\label{eq:tdelta-def}
\theta(t_{\delta_0})=\frac{3\pi}{2}-\delta_0.
\end{equation}
For the limiting circle this angle is reached at $t=n(\pi-\delta_0),$ and the corresponding point is $X_0=B-n-n\cos\delta_0$, $Z_0=n\sin\delta_0.$
We now establish quantitative estimates near $t_{\delta_0}$, analogous to those in Proposition \ref{prop:first-branch}.
\begin{lemma}\label{lem:fixed-angle}
For every $\lambda\ge\Lambda_n$ and $\frac{3n}{2}\le B\le\frac{5n}{2}.$ the time $t_{\delta_0}$ is well-defined.  Moreover,
\begin{align}
\left|t_{\delta_0}-n(\pi-\delta_0)\right|
&\le\frac{60n^3 800^n}{\lambda^2},
\label{eq:tdelta-explicit}\\
\left|X(t_{\delta_0})-\bigl(B-n-n\cos\delta_0\bigr)\right|
&\le\frac{60n^3 800^n}{\lambda^2},
\label{eq:Xdelta-explicit}\\
\left|Z(t_{\delta_0})-n\sin\delta_0\right|
&\le\frac{60n^3 800^n}{\lambda^2}.
\label{eq:Zdelta-explicit}
\end{align}
In particular,
\begin{equation}\label{eq:fixed-coarse}
\left|X(t_{\delta_0})-\bigl(B-n-n\cos\delta_0\bigr)\right|
\le\frac n{400},
\end{equation}
\begin{equation}\label{eq:Zdelta-upper}
0<Z(t_{\delta_0})\le2n\sin\delta_0\le\frac n{50},
\end{equation}
and
\begin{equation}\label{eq:k-fixed}
k\ge\frac{3}{4n}
\end{equation}
on $[t_1,t_{\delta_0}]$.
\end{lemma}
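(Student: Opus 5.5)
We follow the template of the proof of Proposition~\ref{prop:first-branch}, now on the arc $\theta\in[\pi,\frac{3\pi}{2}-\delta_0]$, starting from the data at $t_1$ supplied by \eqref{eq:t1-explicit}--\eqref{eq:t1-coarse}. Consider the maximal interval $[t_1,T)$ on which
\[
\pi\le\theta<\frac{3\pi}{2}-\delta_0,\qquad Z>0,\qquad t\le t_1+n\pi .
\]
On this interval $X_t\le 0$ and $Z_t<0$ (for $\theta>\pi$). Hence $Z$ is strictly decreasing and can serve as a parameter. The unit speed bound gives $|X|<10n$ and $Z<4n$, so $|XZ_t-ZX_t|\le 14n$, say.

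The key step is a bound for $P$ on this branch. We integrate \eqref{eq:P-Z} \emph{in $Z$} from $Z(t_1)$ down to $Z(t)$, starting from $|P(t_1)|\le \frac{5}{\lambda^2}Z(t_1)^n$. This yields
\[
|P(t)|\le \frac{C n}{\lambda^2}\,Z(t_1)^n\le \frac{Cn(11n/10)^n}{\lambda^2}.
\]
This bound does \emph{not} decay with $Z(t)$. This is the main obstacle, since in \eqref{eq:k-P} the term $(n-1)P/Z^n$ blows up as $Z\to0$; it is exactly why we stop at the angle $\frac{3\pi}{2}-\delta_0$ rather than at the axis.

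To close the argument we run a bootstrap. As long as $|k-\frac1n|\le\frac1{4n}$, the curve stays close to the circle, and by comparison $Z(t)\ge \frac12 n\sin\delta_0\ge \frac{n}{400}$ up to angle $\frac{3\pi}{2}-\delta_0$. This can be seen by integrating $dZ/d\theta=\sin\theta/k$ with $1/k\in[\frac{4n}{5},\frac{4n}{3}]$ and comparing with $n\cos\delta_0$-type quantities; alternatively, one can integrate $|dZ/d\theta-n\sin\theta|$ as in the previous proof once $|1/k-n|$ is small. Then \eqref{eq:k-P} gives
\[
\Bigl|k-\frac1n\Bigr|\le \frac{(n-1)Cn(11n/10)^n 400^n}{n^n\lambda^2}+\frac{7n}{\lambda^2}\lesssim \frac{n^2 800^n}{\lambda^2}.
\]
By \eqref{eq:numeric-2} this is far below $\frac1{4n}$, so the bootstrap closes. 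It also gives $k\ge\frac1n-\frac1{4n}=\frac{3}{4n}$, which is \eqref{eq:k-fixed}. Since $\theta_t=k>0$ and $\theta$ must gain less than $\frac{\pi}{2}$, the time $T$ is reached via $\theta=\frac{3\pi}{2}-\delta_0$ rather than via $Z=0$ or $t=t_1+n\pi$; hence $t_{\delta_0}$ exists.

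For the quantitative estimates we again use $|\frac1k-n|\le \frac{n|k-1/n|}{k}\le \frac{4n^2}{3}\cdot\frac{Cn^2800^n}{\lambda^2}$. We integrate $\frac{dt}{d\theta}=\frac1k$, $\frac{dX}{d\theta}=\frac{\cos\theta}{k}$ and $\frac{dZ}{d\theta}=\frac{\sin\theta}{k}$ over $[\pi,\frac{3\pi}{2}-\delta_0]$, which has length below $\frac{\pi}{2}$, and compare with the circle values. We then add the errors at $t_1$ from Proposition~\ref{prop:first-branch}, namely $\frac{10\pi n^3}{\lambda^2}$. Tracking constants should give \eqref{eq:tdelta-explicit}--\eqref{eq:Zdelta-explicit} with the constant $60n^3800^n$.

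The coarse bounds \eqref{eq:fixed-coarse} and \eqref{eq:Zdelta-upper} then follow from \eqref{eq:numeric-2}, using $n\sin\delta_0+\frac{n}{400}\le 2n\sin\delta_0$ because $\sin\frac1{100}>\frac1{400}$. The delicate point is keeping the $Z^{-n}$ amplification explicit, with a lower bound on $Z$ of the form $cn\sin\delta_0$, so that the final constant is genuinely of size $800^n$.
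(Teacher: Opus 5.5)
Your overall route is the paper's: integrate \eqref{eq:P-Z} in $Z$ from $t_1$, accept that the resulting bound $|P|\lesssim\lambda^{-2}(11n/10)^n$ does not decay, use a lower bound $Z\ge\frac n{400}$ to control $(n-1)P/Z^n$ in \eqref{eq:k-P}, and compare with the circle in the $\theta$-parametrization. The gap is in how you close the bootstrap, which is the step that shows the curve reaches the target angle before the axis.

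\textbf{The gap.} You take $|k-\frac1n|\le\frac1{4n}$ as the bootstrap hypothesis and claim it forces $Z\ge\frac12 n\sin\delta_0$ up to $\theta=\frac{3\pi}{2}-\delta_0$. It does not. That hypothesis only gives $\frac1k\in[\frac{4n}{5},\frac{4n}{3}]$, and integrating $\frac{dZ}{d\theta}=\frac{\sin\theta}{k}$ over $[\pi,\frac{3\pi}{2}-\delta_0]$ yields only
\[
Z\ge Z(t_1)-\frac{4n}{3}\,(1-\sin\delta_0),
\]
which is negative because $Z(t_1)<\frac{11n}{10}$. So your first justification fails.

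Your second justification (``once $|1/k-n|$ is small'') needs the bound $|\frac1k-n|=O(n^3800^n/\lambda^2)$. But you obtain that bound from \eqref{eq:k-P} only after inserting $Z\ge\frac n{400}$. As written the argument is circular, and nothing excludes $Z$ dropping to $\frac n{400}$, or to $0$, before the target angle.

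\textbf{The repair.} Put the lower bound on $Z$ into the maximal-interval definition itself, which is what the paper does. Replace your condition $Z>0$ by $Z\ge\frac n{400}$; the paper also imposes $k\ge\frac1{2n}$, which plays the role of your time cap. Then drop the inner bootstrap on $k$. On this interval:
\begin{itemize}
\item \eqref{eq:k-P} gives $|k-\frac1n|\le 11n\,800^n/\lambda^2\le\frac1{4n}$.
\item Hence $|\frac1k-n|\le 15n^3800^n/\lambda^2$.
\item Hence $|Z-Z_0|\le\frac n{400}$ by \eqref{eq:numeric-2}.
\end{itemize}
Since $Z_0=-n\cos\theta\ge n\sin\delta_0$ on this arc, you get $Z\ge n\sin\delta_0-\frac n{400}>\frac n{400}$ strictly. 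You also get $k\ge\frac3{4n}>\frac1{2n}$ strictly, and the elapsed time is at most $\frac{4n}{3}(\frac\pi2-\delta_0)<n\pi$. By continuity the interval can therefore only end at $\theta=\frac{3\pi}{2}-\delta_0$.

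With this change the rest of your argument goes through, including the coarse bounds via $\sin\frac1{100}>\frac1{400}$. One note on constants: $|k-\frac1n|$ is actually $O(n\,800^n/\lambda^2)$, not $O(n^2800^n/\lambda^2)$. The factor $(11/10)^n400^n=440^n$ leaves room, and the $n$ in $|XZ_t-ZX_t|\lesssim n$ cancels the $1/n$ from $\int r^{n-1}dr$. You need this to land on $n^3$ rather than $n^4$ in \eqref{eq:tdelta-explicit}--\eqref{eq:Zdelta-explicit}.
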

\begin{proof}
Starting from $t_1$, consider the maximal connected interval on which
\[
\pi\le\theta\le\frac{3\pi}{2}-\delta_0,
\qquad
Z\ge\frac n{400},
\qquad
k\ge\frac1{2n}.
\]
It is nonempty by Proposition~\ref{prop:first-branch}.  On this interval $Z_t<0$, so $Z\le Z(t_1)<\frac{11n}{10}$.  Also $\frac{dt}{d\theta}=\frac{1}{k}\leq 2n,$ hence for every $t$ in the interval,
\begin{equation}
    t-t_1=\int_{\pi}^{\theta(t)}\frac{1}{k}\,d\theta \leq 2n\bigl(\theta(t)-\pi\bigr) \leq 2n\left(\frac{\pi}{2}-\delta_0\right)<n\pi.
\end{equation}
By Proposition \ref{prop:first-branch} , $\frac{3n}{2}\le B\le\frac{5n}{2}$ and $\lambda\ge\Lambda_n$, we have
\begin{equation*}
|X(t_1)|\leq |X(t_1)-(B-n)|+|B-n| \leq \frac{10\pi n^3}{\lambda^2}+\frac{3n}{2}<\frac{n}{10}+\frac{3n}{2}=\frac{8n}{5}.
\end{equation*}
then we obtain
\begin{align*}
|X(t)|&=\left|X(t_1)+\int_{t_1}^{t}X_s(s)\,ds\right| \leq |X(t_1)|+\int_{t_1}^{t}|X_s(s)|\,ds\\
&\leq|X(t_1)|+(t-t_1)<\frac{8n}{5}+n\pi <5n.
\end{align*}
Finally, using $|X_t|\leq 1$ and $|Z_t|\leq 1$,
we conclude that
\begin{align}
|XZ_t-ZX_t|&\leq |X||Z_t|+Z|X_t|\notag \leq |X|+Z\notag\\
&<5n+\frac{11n}{10}
<7n.
\label{eq:Q-fixed}
\end{align}
At $t_1$, \eqref{eq:P-first-explicit} gives
\[
|P(t_1)|\le\frac5{\lambda^2}
\left(\frac{11n}{10}\right)^n.
\]
Since $Z$ decreases after $t_1$, \eqref{eq:P-Z} and
\eqref{eq:Q-fixed} give
\[
|P(t)-P(t_1)|
\le\frac{7n}{2\lambda^2}
\int_{Z(t)}^{Z(t_1)}r^{n-1}\,dr
\le\frac{7}{2\lambda^2}
\left(\frac{11n}{10}\right)^n.
\]
Consequently,
\begin{equation}\label{eq:P-fixed}
|P(t)|
\le\frac{10(2n)^n}{\lambda^2}.
\end{equation}
Because $Z\ge \frac{n}{400}$, equations \eqref{eq:k-P},
\eqref{eq:Q-fixed} and \eqref{eq:P-fixed} yield
\begin{equation}\label{eq:k-fixed-error}
\left|k-\frac1n\right|
\le\frac{11n800^n}{\lambda^2}\le \frac{1}{4n},
\end{equation}
so in fact $k\ge\frac{3}{4n}$. 

Furthermore, combining with \eqref{eq:k-fixed-error}, we obtain
\begin{equation}\label{eq:inverse-k-fixed}
\left|\frac1k-n\right|
=\frac{n}{k}\left|k-\frac1n\right| \leq
n\cdot\frac{4n}{3}\cdot\frac{11n\cdot 800^n}
{\lambda^2}  =\frac{44n^3\cdot 800^n}
{3\lambda^2}  \leq\frac{15n^3\cdot 800^n}
{\lambda^2}.
\end{equation}
Since $\frac{dt}{d\theta}=\frac1k$, integrating from $\theta=\pi$ to an arbitrary
$\theta\in[\pi,\frac{3\pi}{2}-\delta_0]$, we obtain
\[
t-t_1=\int_\pi^\theta\frac1k\,d\varphi.
\]
Consequently, Proposition~\ref{prop:first-branch} and the preceding
estimate yield
\[
\begin{aligned}
\left|t-n\left(\theta-\frac{\pi}{2}\right)\right|
&\leq \left|t_1-\frac{n\pi}{2}\right|
+\int_\pi^\theta
\left|\frac1k-n\right|\,d\varphi\\
&\leq \frac{10\pi n^3}{\lambda^2}
+\frac{15n^3 800^n}{\lambda^2}(\theta-\pi)\\
&\leq \frac{60n^3 800^n}{\lambda^2}.
\end{aligned}
\]
In particular, taking
$\theta=\frac{3\pi}{2}-\delta_0$, we obtain
\[
\left|t_{\delta_0}-n(\pi-\delta_0)\right|
\leq \frac{60n^3 800^n}{\lambda^2}.
\]
As functions of the angle $\theta$, the limiting circle is
\begin{equation}\label{eq:limit-angle}
X_0=B-n+n\sin\theta,
\qquad
Z_0=-n\cos\theta.
\end{equation}
In particular,
\[
X_0(\pi)=B-n,\qquad Z_0(\pi)=n.
\]
By Proposition~\ref{prop:first-branch}, the errors at
$\theta=\pi$ satisfy
\begin{equation}\label{eq:initial-errors-fixed}
\left|X(t_1)-X_0(\pi)\right|
\leq\frac{10\pi n^3}{\lambda^2},
\qquad
\left|Z(t_1)-Z_0(\pi)\right|
\leq\frac{10\pi n^3}{\lambda^2}.
\end{equation}

Since $\theta_t=k>0$, we may use $\theta$ as a parameter.
Writing $X(\theta)=X(t(\theta))$ and
$Z(\theta)=Z(t(\theta))$, we have
\[
\frac{dX}{d\theta}=\frac{\cos\theta}{k},
\qquad
\frac{dZ}{d\theta}=\frac{\sin\theta}{k}.
\]
On the other hand,
\[
\frac{dX_0}{d\theta}=n\cos\theta,
\qquad
\frac{dZ_0}{d\theta}=n\sin\theta.
\]
Consequently, subtracting the corresponding expressions above and integrating from $\theta=\pi$ to $\theta$, we obtain
\begin{align*}
X(\theta)-X_0(\theta)
&=X(t_1)-(B-n)
 +\int_\pi^\theta
 \cos\varphi\left(\frac1k-n\right)d\varphi,\\
Z(\theta)-Z_0(\theta)
&=Z(t_1)-n
 +\int_\pi^\theta
 \sin\varphi\left(\frac1k-n\right)d\varphi.
\end{align*}
Here, $k$ in the integrals is evaluated at $t(\varphi)$.

Using \eqref{eq:initial-errors-fixed} and
\eqref{eq:inverse-k-fixed}, we obtain
\begin{align*}
|X(\theta)-X_0(\theta)|
&\leq\frac{10\pi n^3}{\lambda^2}
 +\frac{15n^3 800^n}{\lambda^2}
\int_\pi^\theta|\cos\varphi|\,d\varphi\leq\frac{(10\pi+15\cdot800^n)n^3}{\lambda^2}
 \leq\frac{60n^3 800^n}{\lambda^2},\\
|Z(\theta)-Z_0(\theta)|
&\leq\frac{10\pi n^3}{\lambda^2}
 +\frac{15n^3 800^n}{\lambda^2}
\int_\pi^\theta|\sin\varphi|\,d\varphi\leq\frac{(10\pi+15\cdot800^n)n^3}{\lambda^2}
 \leq\frac{60n^3 800^n}{\lambda^2}.
\end{align*}
Adding these two inequalities yields
\begin{equation}\label{eq:fixed-trajectory}
|X-X_0|+|Z-Z_0|
\leq\frac{120n^3 800^n}{\lambda^2}.
\end{equation}


In particular, by \eqref{eq:numeric-2}, each of the two coordinate errors is
at most $\frac{n}{400}$.  Since
\[
Z_0\ge n\sin\delta_0>\frac n{200},
\]
so we obtain $Z>\frac{n}{400}$.  Thus the trajectory reaches the target angle and $t_{\delta_0}$ exists. The same integration gives \eqref{eq:fixed-coarse}-- \eqref{eq:k-fixed}.  Finally,
$\frac{n}{400}\le \tfrac12 n\sin\delta_0$, so
\eqref{eq:Zdelta-upper} follows.
\end{proof}

The quantity $P$ vanishes identically on the limiting circle.  We now make its first nonzero variation quantitative.

For $0<\delta<\frac{\pi}{4}$ define
\begin{equation}\label{eq:A-delta}
A_\delta(B)=\frac{n^n}{2}\int_0^{\pi-\delta}\sin^{n-1}\varphi\,\cos\varphi\bigl[n+(B-n)\cos\varphi\bigr]d\varphi.
\end{equation}
Then
\begin{equation}\label{eq:A0}
A_0(B)=\frac{n^n}{2}(B-n)\int_0^\pi\sin^{n-1}\varphi\cos^2\varphi\,d\varphi.
\end{equation}
Because $B-n\ge \frac{n}{2}$, the minimum of $A_0(B)$ is
\begin{equation}\label{eq:m0}
m_0:=\frac{n^{n+1}}4
\int_0^\pi\sin^{n-1}\varphi\cos^2\varphi\,d\varphi.
\end{equation}
We shall need the elementary lower bound
\begin{equation}\label{eq:beta-lower}
\int_0^\pi\sin^{n-1}\varphi\cos^2\varphi\,d\varphi
\ge\frac1{8n^{\frac{3}{2}}}.
\end{equation}
Indeed, with $u=\cos\varphi$ the left-hand side equals $2\int_0^1 u^2(1-u^2)^{\frac{n-2}{2}}\,du.$
Restricting the integration to
$u\in\left[\frac{1}{2\sqrt{n}},\frac{1}{\sqrt{n}}\right]$
and using
$\left(1-\frac{1}{n}\right)^{\frac{n-2}{2}}\geq\frac{1}{2}$
proves \eqref{eq:beta-lower}. Hence
\begin{equation}\label{eq:m0-lower}
m_0\geq\frac{n^{n-\frac{1}{2}}}{32}.
\end{equation}

\begin{lemma}\label{lem:first-variation}
For every $\lambda\ge\Lambda_n$ and $\frac{3n}{2}\le B\le\frac{5n}{2}$,
\begin{equation}\label{eq:Pdelta-lower}
\frac{m_0}{2\lambda^2}
\le P(t_{\delta_0})
\le\frac{6n^{n+1}}{\lambda^2}.
\end{equation}
\end{lemma}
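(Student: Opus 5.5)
The plan is to compute $P(t_{\delta_0})$ exactly to leading order in $\lambda^{-2}$ and compare it with $A_{\delta_0}(B)/\lambda^2$. Since $P(0)=0$, \eqref{eq:P-t} gives
\[
P(t_{\delta_0})=\frac1{2\lambda^2}\int_0^{t_{\delta_0}}Z^{n-1}Z_t\,(XZ_t-ZX_t)\,dt .
\]
By Proposition~\ref{prop:first-branch} and Lemma~\ref{lem:fixed-angle}, $k\ge\frac1{2n}>0$ on $[0,t_{\delta_0}]$. So we may reparametrize by $\theta\in[\frac\pi2,\frac{3\pi}2-\delta_0]$, with $dt=d\theta/k$, and set $\varphi=\theta-\frac\pi2\in[0,\pi-\delta_0]$.

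On the limiting circle \eqref{eq:limit-angle} we have
\[
X_0=B-n+n\cos\varphi,\qquad Z_0=n\sin\varphi,\qquad Z_t=\cos\varphi,\qquad X_t=-\sin\varphi,\qquad 1/k=n .
\]
A direct substitution gives $X_0Z_t-Z_0X_t=n+(B-n)\cos\varphi$. The limiting integrand in $\varphi$ is therefore $n^n\sin^{n-1}\varphi\cos\varphi\,[n+(B-n)\cos\varphi]$, and the limiting value of $P(t_{\delta_0})$ is exactly $A_{\delta_0}(B)/\lambda^2$ by \eqref{eq:A-delta}. This identification is the heart of the argument.

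Next I would bound the error
\[
E:=\lambda^2P(t_{\delta_0})-A_{\delta_0}(B).
\]
The integrand is $\tfrac12 Z^{n-1}\sin\theta\,(X\sin\theta-Z\cos\theta)\,k^{-1}$, with $\sin\theta,\cos\theta$ exact functions of $\theta$. Only $X$, $Z$ and $1/k$ differ from their limits, and these errors are controlled as follows:
\begin{itemize}
\item $|X-X_0|+|Z-Z_0|\le 120n^3800^n/\lambda^2$, from \eqref{eq:fixed-trajectory} on the second arc and from the analogous (better) bounds obtained in the proof of Proposition~\ref{prop:first-branch} on the first arc;
\item $|1/k-n|\le 15n^3800^n/\lambda^2$, from \eqref{eq:inverse-k-fixed} and \eqref{eq:inverse-k-first}.
\end{itemize}
All quantities are bounded: $Z\le 2n$, $|X|\le 6n$, $|XZ_t-ZX_t|\le 10n$, $1/k\le 2n$. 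The mean value bound $|Z^{n-1}-Z_0^{n-1}|\le (n-1)(2n)^{n-2}|Z-Z_0|$ then yields a crude estimate $|E|\le C\,n^4(1600n)^n/\lambda^2$ over an interval of length $\le\pi$. With a bounded constant $C$, \eqref{eq:numeric-3} makes this at most $n^{n-1/2}/128\le m_0/4$ by \eqref{eq:m0-lower}. Keeping the constant $C$ within what \eqref{eq:numeric-3} allows is the main bookkeeping obstacle. I expect it to work because the slack from the factor $2^n$ between $800^n$ and $1600^n$ is large.

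For the lower bound, write $A_{\delta_0}(B)=A_0(B)-T$, where $T$ is the integral over $[\pi-\delta_0,\pi]$. There $|\sin\varphi|\le\delta_0$ and $|n+(B-n)\cos\varphi|\le 3n/2$, so
\[
|T|\le \tfrac{n^n}{2}\,\delta_0\cdot\delta_0^{n-1}\cdot\tfrac{3n}{2}\le n^{n+1}100^{-n}.
\]
This is far below $m_0/4$, using $m_0\ge n^{n-1/2}/32$ and $n^{3/2}100^{-n}\le 1/128$. Since $A_0(B)\ge m_0$ by \eqref{eq:m0}, we get $\lambda^2P(t_{\delta_0})\ge m_0-m_0/4-m_0/4=m_0/2$.

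For the upper bound, $|A_{\delta_0}(B)|\le\frac{n^n}{2}\cdot\pi\cdot\frac{5n}{2}<4n^{n+1}$, since $n+(B-n)\cos\varphi\le n+\tfrac{3n}{2}$. Together with $|E|\le n^{n-1/2}/128<2n^{n+1}$, this gives $\lambda^2P(t_{\delta_0})\le 6n^{n+1}$, which is \eqref{eq:Pdelta-lower}.
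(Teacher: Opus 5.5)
Your proposal is correct and follows the paper's proof essentially step for step. You reparametrize \eqref{eq:P-t} by the angle, identify the limiting integral with $A_{\delta_0}(B)$, and bound the error by a mean-value estimate of order $n^4(1600n)^n/\lambda^2$, absorbed via \eqref{eq:numeric-3} and \eqref{eq:m0-lower}. You then use the tail comparison $A_{\delta_0}(B)\ge A_0(B)-\frac{m_0}{4}$ for the lower bound and the crude bound on $|A_{\delta_0}(B)|$ for the upper bound.

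The bookkeeping you left open does close. Even with your bounds $|XZ_t-ZX_t|\le 10n$ and $1/k\le 2n$, the dominant term coming from $|Z^{n-1}-Z_0^{n-1}|$ contributes at most $300\pi\,n^4(1600n)^n/\lambda^2$ over an interval of length $\pi$. The remaining two terms are smaller by a factor $2^{-n}$, so the total constant stays below $1000$. Your tail estimate $n^{n+1}100^{-n}$ is also slightly more careful than the paper's displayed $\frac54 n^n\delta_0^n$, which drops a factor of $n$; that slip is harmless.
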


\begin{proof}
First we compare $A_{\delta_0}$ with $A_0$.  Since
$|B-n|\le \frac{3n}{2}$,
\begin{align*}
|A_0(B)-A_{\delta_0}(B)|
&\le\frac{n^n}{2}\frac{5n}{2}
\int_{\pi-\delta_0}^{\pi}\sin^{n-1}\varphi\,d\varphi\\
&\le\frac54 n^n\delta_0^n.
\end{align*}
For $\delta_0=\frac{1}{100}$ and $n\ge2$,
\[
\frac54n^n100^{-n}\le\frac{n^{n-\frac{1}{2}}}{128}
\le\frac{m_0}{4}.
\]
Therefore
\begin{equation}\label{eq:Adelta-positive}
A_{\delta_0}(B)\ge\frac{3m_0}{4}.
\end{equation}

Next, by \eqref{eq:P-t}, using $\theta$ as parameter,
\begin{equation}\label{eq:P-angle-integral}
\lambda^2P(t_{\delta_0})
=\frac12\int_{\pi/2}^{3\pi/2-\delta_0}
Z^{n-1}\sin\theta
(X\sin\theta-Z\cos\theta)\frac{d\theta}{k}.
\end{equation}
For the limiting circle, \eqref{eq:limit-angle} and $\frac{1}{k}=n$ give exactly
$A_{\delta_0}(B)$.

On the whole angle interval under consideration,
Proposition~\ref{prop:first-branch} and Lemma~\ref{lem:fixed-angle} give
\[
|X-X_0|+|Z-Z_0|
\le\frac{120n^3 800^n}{\lambda^2},
\qquad
\left|\frac1k-n\right|
\le\frac{20n^3 800^n}{\lambda^2}.
\]
Moreover $|X|,|X_0|\le3n$, $0\le Z,Z_0\le2n$, and therefore
\[
|X\sin\theta-Z\cos\theta|\le6n,
\qquad
|X_0\sin\theta-Z_0\cos\theta|\le6n.
\]
Using
\[
|Z^{n-1}-Z_0^{n-1}|
\le(n-1)(2n)^{n-2}|Z-Z_0|
\]
and expanding the difference of the two integrands in
\eqref{eq:P-angle-integral}, one obtains
\begin{equation}\label{eq:first-var-error}
\left|\lambda^2P(t_{\delta_0})-A_{\delta_0}(B)\right|
\le\frac{1000n^4(1600n)^n}{\lambda^2}.
\end{equation}
By \eqref{eq:numeric-3} and \eqref{eq:m0-lower}, the right-hand side is at
most $\frac{m_0}{4}$.  Combining this with \eqref{eq:Adelta-positive} gives
\[
\lambda^2P(t_{\delta_0})\ge\frac{m_0}{2}.
\]
For the upper bound, the absolute value of \eqref{eq:A-delta} is at most
$5n^{n+1}$, while the error in \eqref{eq:first-var-error} is much smaller
than $n^{n+1}$.  Hence
\[
\lambda^2P(t_{\delta_0})\le6n^{n+1}.
\]
This proves \eqref{eq:Pdelta-lower}.
\end{proof}

\begin{lemma}\label{lem:P-positive}
Along the connected part of the trajectory satisfying
\begin{equation}\label{eq:small-angle-sector}
\frac{3\pi}{2}-\delta_0\le\theta\le\frac{3\pi}{2},
\qquad Z>0,
\end{equation}
the total variation of $P$ satisifies
\begin{equation}\label{eq:P-variation-explicit}
|\Delta P|\le\frac{m_0}{8\lambda^2}.
\end{equation}
Consequently,
\begin{equation}\label{eq:P-positive-explicit}
P\ge\frac{m_0}{4\lambda^2}>0
\end{equation}
on that connected segment.
\end{lemma}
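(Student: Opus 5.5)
Starting from $t_{\delta_0}$, we work on the maximal connected interval on which $\frac{3\pi}{2}-\delta_0\le\theta\le\frac{3\pi}{2}$ and $Z>0$. On this interval $\sin\theta\le-\cos\delta_0<0$, so $Z_t<0$. Hence $Z$ is strictly decreasing, and we may use $Z$ as the parameter. By \eqref{eq:Zdelta-upper}, $Z$ ranges over a subset of $(0,Z(t_{\delta_0})]\subset(0,\frac{n}{50}]$. The plan is to integrate \eqref{eq:P-Z} in the variable $Z$, exactly as in the proofs of Proposition~\ref{prop:first-branch} and Lemma~\ref{lem:fixed-angle}. This gives
\[
|\Delta P|\le\frac{1}{2\lambda^2}\sup|XZ_t-ZX_t|\int_0^{n/50}r^{n-1}\,dr
=\frac{\sup|XZ_t-ZX_t|}{2n\lambda^2}\Bigl(\frac{n}{50}\Bigr)^n.
\]
The monotonicity of $Z$ is what makes the integral bound legitimate. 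It holds even though the curvature $k$ may not have a sign in this sector, so no control of $k$ is needed.

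The remaining task is to bound $|X|$ on the segment, which I expect to be the main obstacle. Arc length in this sector is not a priori bounded by the angle, since $k$ is not controlled near the axis. Instead I will control the length through $Z$. Because $|Z_t|=|\sin\theta|\ge\cos\delta_0\ge\frac12$, we have $dt\le2\,|dZ|$, so the total length of the segment is at most $2Z(t_{\delta_0})\le\frac{n}{25}$. Combining this with \eqref{eq:fixed-coarse} and $\frac{3n}{2}\le B\le\frac{5n}{2}$ gives $|X|\le|B-2n|+\frac{n}{100}+\frac{n}{25}<n$, and also $Z\le\frac n{50}$. Therefore $|XZ_t-ZX_t|\le|X|+Z<2n$.

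Substituting this bound, $|\Delta P|\le\frac{1}{\lambda^2}(n/50)^n=\frac{n^n50^{-n}}{\lambda^2}$. We compare this with \eqref{eq:m0-lower}, namely $m_0\ge n^{n-1/2}/32$. It suffices to have $n^n50^{-n}\le\frac{n^{n-1/2}}{256}$, that is, $256\sqrt n\le50^n$. This holds for all $n\ge2$: at $n=2$ the left side is about $362$ and the right side is $2500$, and the right side grows much faster. This proves \eqref{eq:P-variation-explicit}.

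Finally, Lemma~\ref{lem:first-variation} gives $P(t_{\delta_0})\ge\frac{m_0}{2\lambda^2}$. Together with the variation bound, every point of the segment satisfies $P\ge\frac{m_0}{2\lambda^2}-\frac{m_0}{8\lambda^2}\ge\frac{m_0}{4\lambda^2}>0$, which is \eqref{eq:P-positive-explicit}.
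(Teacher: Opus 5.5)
Your proposal is correct and follows essentially the same route as the paper. You integrate \eqref{eq:P-Z} in the monotone variable $Z$ over $(0,Z(t_{\delta_0})]\subset(0,\frac{n}{50}]$, bound $|XZ_t-ZX_t|$ by keeping $X$ close to $X(t_{\delta_0})$, and subtract the resulting loss from the lower bound of Lemma~\ref{lem:first-variation}. The differences are cosmetic: you control the drift of $X$ through the arc-length bound $dt\le 2|dZ|$ instead of the slope bound $|dX/dZ|=|\cot\theta|\le\tan\delta_0$, and you use the cruder bound $|XZ_t-ZX_t|<2n$, which needs $256\sqrt n\le 50^n$ rather than $128\sqrt n\le 50^n$. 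Your explicit remark that $Z_t\le-\cos\delta_0$ holds without any sign on $k$ usefully spells out what the paper leaves implicit.
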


\begin{proof}
By \eqref{eq:fixed-coarse},
$|X(t_{\delta_0})|<0.52n$.  By \eqref{eq:small-angle-sector},
\[
\left|\frac{dX}{dZ}\right|=|\cot\theta|
\le\tan\delta_0<1.
\]
Also, by \eqref{eq:Zdelta-upper},
$0<Z\le \frac{n}{50}$.  Hence $|X|<0.55n$ throughout this segment and
\begin{equation}\label{eq:Q-small}
|XZ_t-ZX_t|<n.
\end{equation}
Using \eqref{eq:P-Z},
$$
|\Delta P|\le\frac{n}{2\lambda^2} \int_0^{Z(t_{\delta_0})}r^{n-1}\,dr \le\frac1{2\lambda^2}\left(\frac n{50}\right)^n.
$$
For $n\ge2$,
\[
\frac12\left(\frac n{50}\right)^n
\le\frac{n^{n-1/2}}{256}
\le\frac{m_0}{8},
\]
where the last inequality follows from \eqref{eq:m0-lower}.  This proves
\eqref{eq:P-variation-explicit}.  Subtracting the maximum possible loss from
\eqref{eq:Pdelta-lower} gives
\[
P\ge\frac{m_0}{2\lambda^2}-\frac{m_0}{8\lambda^2}
\ge\frac{m_0}{4\lambda^2}.
\]
\end{proof}

\begin{proposition}\label{prop:vertical-tangent}
For every $\lambda\ge\Lambda_n$ and $\frac{3n}{2}\le B\le\frac{5n}{2}$, there exists a unique finite time $t_m>t_1$ such that
\begin{equation}\label{eq:tm-def}
X_t(t_m)=0,\qquad Z_t(t_m)=-1,\qquad Z(t_m)>0.
\end{equation}
Moreover,
\begin{equation}\label{eq:k-to-tm}
k>0\qquad\text{on }[t_1,t_m],
\end{equation}
and
\begin{equation}\label{eq:Zm-explicit}
\frac{nm_0}{4\lambda^2}
\le Z(t_m)^n
\le\frac{7n^{n+2}}{\lambda^2}.
\end{equation}
In particular,
\begin{equation}\label{eq:Zm-small}
Z(t_m)\le\frac18.
\end{equation}
\end{proposition}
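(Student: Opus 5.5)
Starting from $t_{\delta_0}$, I would follow the trajectory along the connected segment on which $\frac{3\pi}{2}-\delta_0\le\theta\le\frac{3\pi}{2}$ and $Z>0$. Lemma~\ref{lem:P-positive} says that $P\ge \frac{m_0}{4\lambda^2}>0$ on this segment. The plan is to use this positivity to force $\theta$ to reach $\frac{3\pi}{2}$ before $Z$ reaches $0$.

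\textbf{Step 1: $k$ stays positive.} On the segment we have $Z_t=\sin\theta\le-\cos\delta_0<0$, $Z\le n/50$, and $|XZ_t-ZX_t|<n$ by \eqref{eq:Q-small}. Substituting into \eqref{eq:k-P} gives
\[
k\ \ge\ \frac1n+(n-1)\frac{m_0}{4\lambda^2Z^n}-\frac{n}{2\lambda^2}\ \ge\ \frac1{2n}
\]
by \eqref{eq:numeric-1}. So $\theta$ is strictly increasing, and $dt/d\theta\le 2n$. Together with $k\ge\frac{3}{4n}$ on $[t_1,t_{\delta_0}]$ from Lemma~\ref{lem:fixed-angle}, this yields \eqref{eq:k-to-tm}.

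\textbf{Step 2: $\theta$ reaches $\frac{3\pi}{2}$ while $Z>0$.} This is the main obstacle, since a priori the curve might hit the axis first. Here the singular term does the work: as $Z\to0$ with $P$ bounded below, $k\ge (n-1)\frac{m_0}{4\lambda^2Z^n}\to\infty$. I would pass to $Z$ as the parameter, which is legitimate because $Z_t\le-\cos\delta_0$. Then
\[
\frac{d\theta}{dZ}=\frac{k}{Z_t}\le -\frac{(n-1)m_0}{4\lambda^2 Z^n},
\]
so $\theta$ increases by an amount comparable to $\int_{0}Z^{-n}\,dZ=\infty$ as $Z\downarrow0$. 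Hence $\theta$ gains more than $\delta_0$ before $Z$ reaches $0$. Let $t_m$ be the first time with $\theta=\frac{3\pi}{2}$. This gives existence, with $Z(t_m)>0$, $X_t(t_m)=0$ and $Z_t(t_m)=-1$. Uniqueness (first such time after $t_1$) follows from monotonicity of $\theta$ on $[t_1,t_m]$.

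\textbf{Step 3: bounds on $Z(t_m)$.} At $t_m$ we have $X_t=0$ and $Z_t=-1$. The definition \eqref{eq:P-def} then gives $P(t_m)=Z(t_m)^n/n$, that is, $Z(t_m)^n=nP(t_m)$. For the lower bound, Lemma~\ref{lem:P-positive} gives $P(t_m)\ge \frac{m_0}{4\lambda^2}$, hence $Z(t_m)^n\ge\frac{nm_0}{4\lambda^2}$. For the upper bound, combine the upper bound $\frac{6n^{n+1}}{\lambda^2}$ of Lemma~\ref{lem:first-variation} with the variation bound $\frac{m_0}{8\lambda^2}$. Since $m_0\le \frac{n^{n+1}}{4}\pi$, this gives $P(t_m)\le\frac{7n^{n+1}}{\lambda^2}$ and so $Z(t_m)^n\le \frac{7n^{n+2}}{\lambda^2}$. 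Finally, \eqref{eq:numeric-4} gives $Z(t_m)^n\le 8^{-n}$, which is \eqref{eq:Zm-small}.
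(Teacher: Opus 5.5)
Your proposal is correct. Steps 1 and 3 match the paper's proof. In Step 1 you get $k\ge 1/(2n)$ from \eqref{eq:k-P} using $P>0$ and $|XZ_t-ZX_t|<n$. In Step 3 you use $Z(t_m)^n=nP(t_m)$ together with Lemma~\ref{lem:first-variation} and Lemma~\ref{lem:P-positive}. Your remark $m_0\le \pi n^{n+1}/4$ supplies the small justification the paper leaves implicit in $6n^{n+1}+m_0/8\le 7n^{n+1}$.

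The genuine difference is in Step 2, where you rule out hitting the axis before the vertical tangent.

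\textbf{The paper's argument.} It does this algebraically. While $\theta<\frac{3\pi}{2}$ one has $X_t<0$, so $P=Z^{n-1}(X_t+Z/n)<Z^n/n$. Lemma~\ref{lem:P-positive} then gives the uniform barrier $Z^n> nm_0/(4\lambda^2)$ on the whole segment. This is a one-line explicit lower bound, with the same constant as the final bound at $t_m$.

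\textbf{Your argument.} You use the singular term instead. From $k\ge (n-1)m_0/(4\lambda^2Z^n)$ and $|Z_t|\le 1$ you get $d\theta/dZ\le -(n-1)m_0/(4\lambda^2 Z^n)$. Non-integrability of $Z^{-n}$ at $0$ for $n\ge 2$ then forces $\theta$ to gain more than $\delta_0$ before $Z$ could vanish. This is valid and does not use the sign of $X_t$. Its quantitative output is weaker: only the implicit bound $\delta_0\ge \frac{(n-1)m_0}{4\lambda^2}\int_{Z}^{Z(t_{\delta_0})}r^{-n}\,dr$ along the segment. That is all existence needs, since the sharp lower bound at $t_m$ comes from $P(t_m)=Z(t_m)^n/n$ anyway.
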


\begin{proof}
By Lemma~\ref{lem:fixed-angle}, the profile curve reaches
$\theta=\frac{3\pi}{2}-\delta_0$.  Starting there, consider the maximal connected
segment satisfying \eqref{eq:small-angle-sector} with
$\theta<\frac{3\pi}{2}$.  By Lemma~\ref{lem:P-positive},
\[
P\ge\frac{m_0}{4\lambda^2}>0.
\]
Since $X_t=\cos\theta<0$ before $\frac{3\pi}{2}$,
\[
P=Z^{n-1}\left(X_t+\frac Zn\right)<\frac{Z^n}{n}.
\]
Hence
\begin{equation}\label{eq:Z-away-axis}
Z^n\ge\frac{nm_0}{4\lambda^2}>0.
\end{equation}
Thus the trajectory cannot hit $Z=0$ before reaching the vertical tangent.

The bounds in the proof of Lemma~\ref{lem:P-positive} give
$|XZ_t-ZX_t|<n$.  Therefore, from \eqref{eq:k-P} and $P>0$,
\[
k\ge\frac1n-\frac{n}{2\lambda^2}\ge\frac1{2n}.
\]
Hence $\theta$ reaches $\frac{3\pi}{2}$ in finite time; call that time $t_m$.
Strict positivity of $k$ gives uniqueness.  At $t_m$,
\begin{equation}\label{eq:Ptm}
P(t_m)=\frac{Z(t_m)^n}{n}.
\end{equation}
The lower bound in \eqref{eq:Zm-explicit} follows from
\eqref{eq:P-positive-explicit}.  For the upper bound, Lemma
\ref{lem:first-variation} and Lemma~\ref{lem:P-positive} give
\[
P(t_m)\le\frac{6n^{n+1}}{\lambda^2}
+\frac{m_0}{8\lambda^2}
\le\frac{7n^{n+1}}{\lambda^2}.
\]
Together with \eqref{eq:Ptm}, this proves the upper bound in
\eqref{eq:Zm-explicit}.  Finally, \eqref{eq:numeric-4} implies
$Z(t_m)^n\le8^{-n}$, hence \eqref{eq:Zm-small}.
\end{proof}

On $(t_1,t_m)$ one has $Z_t<0$ and $k>0$, so
\begin{equation}\label{eq:convex-second-left}
X_{tt}=-kZ_t>0.
\end{equation}
Thus $X_t$ increases strictly from $-1$ at $t_1$ to $0$ at $t_m$.

\begin{proposition}\label{prop:second-horizontal}
For every $\lambda\ge\Lambda_n$ and every $B$ satisfying
\eqref{eq:B-range}, there exists a finite time $t_2>t_m$ such that
\begin{equation}\label{eq:t2-def}
Z_t(t_2)=0,\qquad X_t(t_2)=1,\qquad Z(t_2)>0.
\end{equation}
Moreover,
\begin{equation}\label{eq:k-after-tm}
k>0\qquad\text{on }[t_m,t_2],
\end{equation}
and
\begin{equation}\label{eq:horizontal-displacement}
0\le X(t_2)-X(t_m)\le\frac14.
\end{equation}
\end{proposition}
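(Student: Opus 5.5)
Starting from $t_m$, the plan is to follow the trajectory on the maximal connected interval where
\[
\frac{3\pi}{2}\le\theta\le 2\pi,\qquad Z>0,\qquad k\ge\frac1{2n}.
\]
On this interval $X_t=\cos\theta\ge0$, so $X$ is nondecreasing. Since $k>0$, we can use $\theta$ as a parameter, which immediately gives
\[
t-t_m=\int_{3\pi/2}^{\theta}\frac{d\varphi}{k}\le 2n\cdot\frac{\pi}{2}.
\]
This keeps the interval short, so $|X|$ remains bounded by a multiple of $n$, for instance $|X|<2n$ using $|X(t_m)|<0.55n$ from the proof of Lemma~\ref{lem:P-positive}. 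We must also keep $Z$ bounded, say $Z\le 1$. For this, note that $\sin\theta\le0$ at first, so $Z$ decreases from $Z(t_m)\le\frac18$ until $\theta$ reaches $2\pi$. That means $Z\le Z(t_m)$ for as long as the interval persists, and consequently
\[
|XZ_t-ZX_t|<3n.
\]

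The central task is a lower bound on $k$ that holds near the axis, where the term $(n-1)P/Z^n$ appears to be dangerous. The point is that this term actually helps. For $\theta\in[3\pi/2,2\pi]$ we have $X_t\ge0$, so
\[
P=Z^{n-1}X_t+\frac{Z^n}{n}\ge \frac{Z^n}{n}>0,
\]
and hence $(n-1)P/Z^n\ge\frac{n-1}{n}\ge0$. Substituting into \eqref{eq:k-P} gives
\[
k\ge\frac1n-\frac{3n}{2\lambda^2}\ge\frac{1}{2n}
\]
by \eqref{eq:numeric-1}. This bound closes the bootstrap for $k$. In fact, \eqref{eq:k-direct} gives the same conclusion more directly, since $\frac{n-1}{Z}X_t\ge0$ there; either route works.

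The remaining point, and the main obstacle, is to rule out $Z$ reaching $0$ before $\theta=2\pi$. Here I would use the positive quantity $P$ once more. Since $Z_t<0$ on $(t_m,t_2)$, \eqref{eq:P-Z} gives $|dP/dZ|\le \frac{3n}{2\lambda^2}Z^{n-1}$, so $P$ changes by at most $\frac{3}{2\lambda^2}Z(t_m)^n$. Combined with $P(t_m)=Z(t_m)^n/n$ from \eqref{eq:Ptm}, this yields $P\ge Z(t_m)^n\bigl(\frac1n-\frac{3}{2\lambda^2}\bigr)>0$. On the other hand, $P\le Z^{n-1}+Z^n/n$, which forces
\[
Z^{n-1}(1+Z/n)\ge \frac{Z(t_m)^n}{2n}>0.
\]
Thus $Z$ stays bounded away from zero, with the bound depending on $\lambda$ but not on time. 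The one issue to check is that $P$ cannot drop below this level, and the estimate just made handles exactly that.

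Since $k\ge\frac1{2n}$ and $Z$ stays positive, $\theta$ reaches $2\pi$ in finite time; call this time $t_2$. This gives \eqref{eq:t2-def} and \eqref{eq:k-after-tm}. For the displacement, the argument is by curvature: on $[t_m,t_2]$ the curve turns through the angle $\pi/2$, and since $\frac{dX}{d\theta}=\cos\theta/k$, we have $X(t_2)-X(t_m)\ge0$. For the upper bound, the key is the relation $\frac{dX}{dZ}=\cot\theta$ together with the fact that $Z$ decreases from $Z(t_m)\le\frac18$. Convexity of the arc, coming from $k>0$ with $\theta$ between $3\pi/2$ and $2\pi$, makes $X$ a convex function of $Z$ along this arc. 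The arc lies in the strip $0<Z\le\frac18$ and is convex. I would bound the horizontal displacement using the strong curvature $k\ge\frac{n-1}{Z}X_t$ from \eqref{eq:k-direct}, which gives $\theta_t\ge (n-1)\cos\theta/Z$. Writing this as $\frac{d\theta}{dX}\ge\frac{n-1}{Z}\ge 8(n-1)$, the total $X$-displacement over the angle range $\pi/2$ is at most $\frac{\pi/2}{8(n-1)}\le\frac{\pi}{16}<\frac14$, which gives \eqref{eq:horizontal-displacement}.
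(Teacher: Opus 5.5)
Your proof is correct. For the existence of $t_2$ it follows the paper's scheme: a bootstrap starting at $t_m$, the variation estimate from \eqref{eq:P-Z} showing that $P$ stays comparable to $P(t_m)=Z(t_m)^n/n>0$, and the fact that $P\to0$ as $Z\to0$, so the curve cannot reach the axis before $\theta=2\pi$.

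\textbf{Differences from the paper.} There are two.
\begin{enumerate}
\item For the curvature bound you use the sign $X_t=\cos\theta\ge0$ for $\theta\in[3\pi/2,2\pi]$. This makes the term $\frac{n-1}{Z}X_t$ in \eqref{eq:k-direct} favourable, so $k\ge 1-O(n/\lambda^2)$ without first proving $P>0$. The paper instead gets $k\ge\frac1{2n}$ from $P>0$ via \eqref{eq:k-P}.
\item For the displacement you use $k\ge\frac{n-1}{Z}X_t$ uniformly on the whole range $\theta\in[3\pi/2,2\pi]$. This gives $0\le dX/d\theta\le Z/(n-1)\le Z(t_m)\le\frac18$, and hence $X(t_2)-X(t_m)\le\pi/16$ in one line. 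The paper splits the range at $7\pi/4$. On the first part it uses $X_t\le -Z_t$. On the second part it needs $X_t\ge1/\sqrt2$, the upper bound $P\le\frac32P(t_m)$, the upper bound in \eqref{eq:Zm-explicit} and \eqref{eq:numeric-5} to get $Z\le\frac1{16}$, and only then estimates $dX/d\theta$.
\end{enumerate}
Your route never uses the upper bound on $P$ or \eqref{eq:numeric-5}; it only needs $Z\le Z(t_m)\le\frac18$. The paper's split buys nothing extra here; it is just less efficient.

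\textbf{A small numerical slip.} From $|X(t_m)|<0.55n$ and $t-t_m\le n\pi$ you only get $|X|<0.55n+n\pi<4n$, not $|X|<2n$. This is harmless. With $|XZ_t-ZX_t|<5n$, every later estimate still goes through by \eqref{eq:numeric-1}:
\begin{itemize}
\item the lower bound on $k$;
\item the positivity of $1-\frac{1}{2\lambda^2}(XZ_t-ZX_t)$, which you need for $k\ge\frac{n-1}{Z}X_t$;
\item the bound $P\ge Z(t_m)^n\bigl(\frac1n-\frac{5}{2\lambda^2}\bigr)>0$.
\end{itemize}
Finally, your remark that $X$ is a convex function of $Z$ is never used and can be dropped.
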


\begin{proof}
At $t=t_m$,
$X_t=0$ and $Z_t=-1$, so \eqref{eq:k-direct} gives
\[
k(t_m)=1+\frac{X(t_m)}{2\lambda^2}.
\]
From the fixed-angle estimate and the slope bound in the small-height sector,
$|X(t_m)|<n$.  Hence $k(t_m)>0$.

Let $[t_m,T)$ be the maximal interval on which
\begin{equation}\label{eq:bootstrap}
\frac{3\pi}{2}\le\theta<2\pi,
\qquad Z>0,
\qquad |X-X(t_m)|<1.
\end{equation}
On this interval $Z_t<0$, so $0<Z\le Z(t_m)\le \frac{1}{8}$.  Also
$|X|<n+1\le \frac{3n}{2}$, hence
\begin{equation}\label{eq:Q-after-tm}
|XZ_t-ZX_t|<2n.
\end{equation}
Using $Z$ as parameter in \eqref{eq:P-Z},
\[
|P(t)-P(t_m)|
\le\frac{2n}{2\lambda^2}
\int_0^{Z(t_m)}r^{n-1}\,dr
=\frac{Z(t_m)^n}{\lambda^2}.
\]
Since $P(t_m)=\frac{Z(t_m)^n}{n}$,
\[
\frac{|P(t)-P(t_m)|}{P(t_m)}\le\frac n{\lambda^2}<\frac12.
\]
Therefore
\begin{equation}\label{eq:P-after-tm}
\frac12P(t_m)\le P(t)\le\frac32P(t_m).
\end{equation}
In particular $P(t)>0$.  Equation \eqref{eq:k-P} and
\eqref{eq:Q-after-tm} give
\[
k\ge\frac1n-\frac n{\lambda^2}\ge\frac1{2n}.
\]
Thus $\theta$ is strictly increasing.

The curve cannot reach $Z=0$, because
\[
\lim_{Z \to 0}P=Z^{n-1}\left(X_t+\frac Zn\right)=0,
\]
contradicting \eqref{eq:P-after-tm}.
It remains to rule out failure of the bootstrap through
$|X-X(t_m)|=1$.

If $\frac{3\pi}{2}\le\theta\le\frac{7\pi}{4}$, then $0\le X_t\le-Z_t$, and hence
\begin{equation}\label{eq:first-right-displacement}
0\le X(t)-X(t_m)
\le Z(t_m)-Z(t)
\le Z(t_m)\le\frac18.
\end{equation}

If $\frac{7\pi}{4}\le\theta<2\pi$, then $X_t\ge \frac{1}{\sqrt2}$.  By \eqref{eq:P-after-tm},
\[
\frac1{\sqrt2}Z^{n-1}
\le P(t)
\le\frac32P(t_m)
=\frac{3}{2n}Z(t_m)^n.
\]
Using the upper bound in \eqref{eq:Zm-explicit},
\begin{equation}\label{eq:Z-last-sector}
Z^{n-1}\le\frac{15n^{n+1}}{\lambda^2}.
\end{equation}
By \eqref{eq:numeric-5},
\begin{equation}\label{eq:Z-last-small}
Z\le\frac1{16}.
\end{equation}
Moreover, from \eqref{eq:k-direct}, \eqref{eq:Q-after-tm} and
$X_t\ge1/\sqrt2$,
\[
k
\ge1+\frac{n-1}{\sqrt2 Z}-\frac n{\lambda^2}
\ge\frac{n-1}{\sqrt2 Z}.
\]
Therefore
\[
0\le\frac{dX}{d\theta}=\frac{X_t}{k}
\le\frac{\sqrt2}{n-1}Z\le\sqrt2 Z.
\]
Integrating over an angular interval of length $\frac{\pi}{4}$ and using
\eqref{eq:Z-last-small},
\begin{equation}\label{eq:last-right-displacement}
0\le X(t)-X(t_{\frac{7\pi}{4}})
\le\frac{\pi\sqrt2}{4}\cdot\frac1{16}<\frac18.
\end{equation}
Combining \eqref{eq:first-right-displacement} and
\eqref{eq:last-right-displacement},
\[
0\le X(t)-X(t_m)<\frac14.
\]
Hence the bootstrap condition $|X-X(t_m)|<1$ cannot fail.  The solution
therefore continues until $\theta=2\pi$.  This defines $t_2$, proves
\eqref{eq:t2-def} and \eqref{eq:k-after-tm}, and also gives
\eqref{eq:horizontal-displacement}.
\end{proof}

On $(t_m,t_2)$, $Z_t<0$ and $k>0$, so again
\[
X_{tt}=-kZ_t>0.
\]
Together with \eqref{eq:convex-second-left}, the whole second branch is
convex to the right.

\begin{proposition}\label{prop:endpoint}
For every $\lambda\ge\Lambda_n$ and $\frac{3n}{2}\le B\le\frac{5n}{2}$ ,
\begin{equation}\label{eq:endpoint-explicit}
|X(t_2)-(B-2n)|<\frac n4.
\end{equation}
\end{proposition}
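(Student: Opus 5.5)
We will bound the displacement $X(t_2)-(B-2n)$ by chaining three estimates already available:
\[
X(t_2)-(B-2n)=\bigl[X(t_2)-X(t_m)\bigr]+\bigl[X(t_m)-X(t_{\delta_0})\bigr]+\bigl[X(t_{\delta_0})-(B-n-n\cos\delta_0)\bigr]+n(1-\cos\delta_0).
\]
The first bracket lies in $[0,\frac14]$ by \eqref{eq:horizontal-displacement} of Proposition~\ref{prop:second-horizontal}. The third is at most $\frac{n}{400}$ in absolute value by \eqref{eq:fixed-coarse} of Lemma~\ref{lem:fixed-angle}. The last term is explicit: $n(1-\cos\delta_0)\le n\delta_0^2/2=\frac{n}{20000}$.

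The middle bracket is the displacement along the small-angle sector $\frac{3\pi}{2}-\delta_0\le\theta\le\frac{3\pi}{2}$, which is the segment treated in Lemma~\ref{lem:P-positive} and Proposition~\ref{prop:vertical-tangent}. There $|dX/dZ|=|\cot\theta|\le\tan\delta_0$, and $Z$ decreases monotonically from $Z(t_{\delta_0})$ to $Z(t_m)>0$. Hence
\[
|X(t_m)-X(t_{\delta_0})|\le\tan\delta_0\,Z(t_{\delta_0})\le\tan\delta_0\cdot\frac{n}{50}\le\frac{n}{4000},
\]
using \eqref{eq:Zdelta-upper}. (Since $X_t<0$ on this segment, the sign is in fact nonpositive, but only the size matters.)

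Summing the four contributions gives
\[
|X(t_2)-(B-2n)|\le\frac14+\frac{n}{400}+\frac{n}{4000}+\frac{n}{20000}.
\]
For $n\ge2$ we have $\frac14\le\frac n8$, so the total is strictly below $\frac n8+\frac n{100}<\frac n4$, which proves \eqref{eq:endpoint-explicit}.

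There is no real obstacle here. The only point needing care is to apply the slope bound $|\cot\theta|\le\tan\delta_0$ with $Z$ as the parameter on the whole sector up to $t_m$. This is legitimate because $Z_t=\sin\theta<0$ strictly for $\theta\in[\frac{3\pi}{2}-\delta_0,\frac{3\pi}{2})$, and $Z$ stays positive there by \eqref{eq:Z-away-axis}.
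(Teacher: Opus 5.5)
Your proof is correct and is essentially the paper's argument: the same splitting at $t_{\delta_0}$ and $t_m$, using \eqref{eq:fixed-coarse}, \eqref{eq:Zdelta-upper}, \eqref{eq:horizontal-displacement} and $1-\cos\delta_0\le\delta_0^2/2$. The only difference is on $[t_{\delta_0},t_m]$, where you use the slope bound $|\cot\theta|\le\tan\delta_0$ to get $n/4000$, while the paper uses the cruder $|X_t|\le -Z_t$ to get $n/50$; either bound suffices.
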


\begin{proof}
At $t_{\delta_0}$, Lemma~\ref{lem:fixed-angle} gives
\[
\left|X(t_{\delta_0})-\bigl(B-n-n\cos\delta_0\bigr)\right|
\le\frac n{400}.
\]
From $t_{\delta_0}$ to $t_m$,
$|X_t|\le-Z_t$, hence
\[
|X(t_m)-X(t_{\delta_0})|
\le Z(t_{\delta_0})-Z(t_m)
\le Z(t_{\delta_0})
\le\frac n{50}.
\]
From $t_m$ to $t_2$, Proposition~\ref{prop:second-horizontal} gives
\[
0\le X(t_2)-X(t_m)\le\frac14\le\frac n8.
\]
Therefore
\begin{align*}
|X(t_2)-(B-2n)|
&\le n(1-\cos\delta_0)
+\frac n{400}+\frac n{50}+\frac n8\\
&\le\frac n{20000}+\frac n{400}+\frac n{50}+\frac n8
<\frac n4,
\end{align*}
where we used $1-\cos\delta_0\le \frac{\delta_0^2}{2}$ and
$\delta_0=\frac{1}{100}$.
\end{proof}

\section{The shooting argument and proof of the main theorem}

\begin{proof}[Proof of Theorem~\ref{thm:main}]
For each
\[
B\in\left[\frac{3n}{2},\frac{5n}{2}\right]
\]
and each $\lambda\ge\Lambda_n$, Proposition~\ref{prop:second-horizontal}
produces a second horizontal tangent $t_2=t_2(B)$ with
\[
Z(t_2)>0,\qquad (X_t,Z_t)(t_2)=(1,0).
\]
By Proposition~\ref{prop:endpoint}, when $B=\frac{3n}{2}$,
\[
X(t_2)
<\left(\frac{3n}{2}-2n\right)+\frac n4
=-\frac n4<0,
\]
while when $B=\frac{5n}{2}$,
\[
X(t_2)
>\left(\frac{5n}{2}-2n\right)-\frac n4
=\frac n4>0.
\]
Therefore, there must exist a \(B\) such that \(X(t_2)=0\). Furthermore, at \(t_2\), we have 
\begin{equation}\label{eq:shooting-zero}
X(t_2)=0,
\qquad Z(t_2)>0,
\qquad (X_t,Z_t)(t_2)=(1,0).
\end{equation}
Reflect the profile segment $[0,t_2]$ across the $Z$-axis.  Because the curve meets the $Z$-axis orthogonally at $(0,Z(t_2))$, the reflected segment joins smoothly.  Rotating the profile about the $X$-axis gives a smooth closed immersed hypersurface diffeomorphic to $\mathbb{S}^n$.

It remains to prove that the immersion is not embedded.  By
\eqref{eq:t1-coarse},
\[
X(t_1)>\frac{2n}{5}>0.
\]
On $(t_m,t_2)$, $X_t>0$, while $X(t_2)=0$, so $X(t_m)<0$.  Since
$X_t<0$ on $(t_1,t_m)$, there is a unique
$t_0\in(t_1,t_m)$ such that
\[
X(t_0)=0.
\]
Since $Z_t<0$ on $(t_1,t_2)$,
\[
Z(t_0)>Z(t_2)>0.
\]
Thus the profile crosses the $Z$-axis at $(0,Z(t_0))$ before reaching the
reflection point $(0,Z(t_2))$.  Its reflected branch passes through the same point.  If $\theta_0=\theta(t_0)\in(\pi,\frac{3\pi}{2})$, the two tangent vectors are $(\cos\theta_0,\sin\theta_0)$ and $(\cos\theta_0,-\sin\theta_0)$, whose determinant is nonzero. Hence the self-intersection is transverse. The resulting $\lambda$-self-expander is therefore immersed but not embedded, and in particular it is not a round sphere.
\end{proof}

\begin{proof}[Proof of Corollary~\ref{cor:mean-convex}]
Along the closed scaled profile constructed above, the estimates of the
previous sections give
\[
|X(t)|+|Z(t)|\le5n.
\]
Indeed, on the first branch $|X|\le \frac{5n}{2}$ and $Z<\frac{11n}{10}$; on the second
branch the fixed-angle, small-height and bootstrap estimates are even
smaller, and the reflected branch has the same bounds.  Scaling back,
\[
|F_\lambda|
=\frac1\lambda\sqrt{X^2+Z^2}
\le\frac{5n}{\lambda}.
\]
Therefore
\[
\bigl|\langle F_\lambda, N\rangle\bigr|
\le |F_\lambda|
\le \frac{5n}{\lambda}.
\]
From \eqref{def},
\[
H=\lambda+\frac{1}{2}\langle F_\lambda, N\rangle
\ge\lambda-\frac{5n}{2\lambda}.
\]
Since $\lambda\ge\Lambda_n$, hence $H>0$.
\end{proof}

\begin{remark}
The threshold
\[
\Lambda_n = 1000^n n^3
\]
is only a sufficient one.  It is intentionally chosen to dominate, with a large safety margin, the powers of $n$ and the fixed-angle denominators that arise when $P/Z^n$ is estimated near the small-height region.  Considerable improvement is possible by optimizing the fixed angle, replacing several coarse bounds by sharp ones, and tracking the first variation more closely. No such optimization is needed for the existence theorem.
\end{remark}

\section{A two-dimensional example}

We illustrate the shooting construction in dimension $n=2$
with a parameter satisfying the explicit hypothesis of
Theorem~\ref{thm:main}. Choose
\[
n=2,\qquad \lambda=10^7.
\]
Since
\[
\Lambda_2=1000^2\cdot 2^3
=8\times10^6<10^7,
\]
the quantitative shooting construction applies.

For $n=2$, the limiting solution is
\[
X_0(t)=B-2+2\cos\frac t2,
\qquad
Z_0(t)=2\sin\frac t2.
\]
Its natural closing value is $B=4$. For finite $\lambda$,
however, the closing parameter is determined by shooting.

The scaled profile equations become
\begin{equation}\label{eq:2d-1e7-system}
\begin{cases}
X_t=\cos\theta,\\
Z_t=\sin\theta,\\[1mm]
\displaystyle
\theta_t
=1+\frac{\cos\theta}{Z}
-\frac{X\sin\theta-Z\cos\theta}{2\times10^{14}},
\end{cases}
\end{equation}
with initial data
\[
X(0)=B,\qquad Z(0)=0,\qquad
\theta(0)=\frac{\pi}{2}.
\]

For each $B\in[3,5]$, let $t_2(B)$ denote the second
horizontal tangent, characterized by
\[
\theta(t_2(B))=2\pi.
\]
Define the shooting function
\[
F(B)=X_B(t_2(B)).
\]
By Proposition~\ref{prop:endpoint},
\[
|F(B)-(B-4)|<\frac14.
\]
In particular,
\[
F(3)<-\frac34<0,
\qquad
F(5)>\frac34>0.
\]
Since $F$ is continuous, there exists $B_*\in(3,5)$
such that
\begin{equation}\label{eq:2d-1e7-closing}
F(B_*)=0,
\qquad
Z_{B_*}(t_2(B_*))>0.
\end{equation}
Applying the same endpoint estimate at $B=B_*$ yields
\[
|B_*-4|<\frac14,
\qquad
\frac{15}{4}<B_*<\frac{17}{4}.
\]
Consequently, the initial point in the original
variables satisfies
\[
b_*=\frac{B_*}{10^7},
\qquad
3.75\times10^{-7}<b_*<4.25\times10^{-7}.
\]

We now give a numerical illustration of the resulting
profile. In dimension two, the auxiliary quantity is
\[
P=Z\cos\theta+\frac{Z^2}{2}.
\]
To avoid loss of accuracy near the axis, we integrate
using $\theta$ as the independent variable:
\[
\frac{dX}{d\theta}=\frac{\cos\theta}{k},
\qquad
\frac{dP}{d\theta}
=
\frac{Z\sin\theta
(X\sin\theta-Z\cos\theta)}
{2\lambda^2k},
\qquad
\frac{dt}{d\theta}=\frac1k,
\]
where
\[
k=\frac12+\frac{P}{Z^2}
-\frac{X\sin\theta-Z\cos\theta}{2\lambda^2}.
\]
The trajectory is initialized near $\theta=\pi/2$
using the regular axis expansion. Solving
$F(B)=0$ numerically gives
\begin{equation}\label{eq:2d-1e7-numerical-B}
B_*\approx3.9999999999992,
\qquad
b_*\approx3.9999999999992\times10^{-7}.
\end{equation}

The distinguished points of the scaled profile,
listed in chronological order, are approximately
\begin{align*}
&t=0:
&& (X,Z)=(3.9999999999992,0),\\
&t_1\approx3.14159265359:
&& (X,Z)\approx
   (1.9999999999992,\;2.0000000000000),\\
&t_{\mathrm{cross}}\approx6.2831839:
&& (X,Z)\approx
   (0,\;1.4\times10^{-6}),\\
&t_m\approx6.28318507624:
&& (X,Z)\approx
   (-4.3\times10^{-13},\;2.30940108\times10^{-7}),\\
&t_2\approx6.28318530718:
&& (X,Z)\approx
   (0,\;2.66666667\times10^{-14}).
\end{align*}
Here $t_{\mathrm{cross}}$ is the first crossing of
the $Z$-axis, $t_1$ is the first horizontal tangent,
and $t_m$ is the subsequent vertical tangent.
These coordinates are numerical approximations;
the existence and closing properties follow
independently from the quantitative shooting theorem.

As a consistency check, at the vertical tangent
$X_t(t_m)=0$, and hence
\[
P(t_m)=\frac{Z(t_m)^2}{2}.
\]
The numerical calculation gives
\[
\lambda^2P(t_m)\approx\frac83.
\]
This agrees with the leading first-variation value
\[
\frac{2^2}{2}(B_*-2)
\int_0^\pi
\sin\varphi\cos^2\varphi\,d\varphi
=\frac43(B_*-2)
\approx\frac83.
\]

Moreover, the quantitative estimate
\eqref{eq:Zm-explicit} gives the rigorous bounds
\[
\frac{2}{3\lambda^2}
\le Z(t_m)^2
\le\frac{112}{\lambda^2}.
\]
In particular, the vertical tangent remains
strictly above the axis.

Returning to the original variables,
\[
x=\frac X\lambda,\qquad z=\frac Z\lambda,
\]
the second horizontal tangent is located
numerically at approximately
\[
(x(t_2),z(t_2))
\approx(0,\;2.66666667\times10^{-21}).
\]

By \eqref{eq:2d-1e7-closing}, the actual profile
meets the $Z$-axis orthogonally at a strictly
positive height. Reflecting it across the
$Z$-axis and rotating the resulting closed
profile about the $x$-axis produces a smooth
closed immersed $\lambda$-self-expander in
$\mathbb R^3$ which is diffeomorphic to $\mathbb{S}^2$
and is not embedded.

\section*{Statements and Declarations}
\noindent \textbf{Conflict of interest}
The author declares that there is no conflict of interest.

\noindent \textbf{Funding}
The author declares that no funds, grants, or other financial support were received for this work.

\end{document}